\begin{document}
\numberwithin{equation}{section} 

\def\1#1{\overline{#1}}
\def\2#1{\widetilde{#1}}
\def\3#1{\widehat{#1}}
\def\4#1{\mathbb{#1}}
\def\5#1{\mathfrak{#1}}
\def\6#1{{\mathcal{#1}}}

\def\C{{\4C}}
\def\R{{\4R}}
\def\N{{\4N}}
\def\Z{{\4Z}}
\def\T{{\Theta}}

%************************** SIDE-REMARKS *****************************
\def\sideremark#1{\ifvmode\leavevmode\fi\vadjust{%            The remark
\vbox to0pt{\hbox to 0pt{\hskip\hsize\hskip1em%               will appear only
\vbox{\hsize1cm\tiny\raggedright\pretolerance10000%          on the side
\noindent #1\hfill}\hss}\vbox to8pt{\vfil}\vss}}}%           in 3cm  
%                                                            wide box
\def\sr#1{\sideremark{#1}}
%shorthand.

%%%%%%%%%%%%%%%%%%%%%%%%%% HIGHLIGHTING *****************************
\def\highlight#1{\color{red} #1\color{black}}
\def\hl#1{\highlight{#1}}

\title[]{Formally-Reversible Maps of $\C^2$}
\author[A. O'Farrell \& D. Zaitsev]{Anthony G. O'Farrell \& Dmitri Zaitsev*}
\address{%
A. O'Farrell: Mathematics Department,
 NUI, Maynooth, Co. Kildare, Ireland}
 \email{admin@maths.nuim.ie}
 \address{
D. Zaitsev: School of Mathematics, 
Trinity College Dublin, Dublin 2, Ireland
}
\email{zaitsev@maths.tcd.ie}
%\subjclass{}
\thanks{$^{*}$Supported in part by the Science Foundation Ireland grant 10/RFP/MTH2878.}

\subjclass[2000]{
30D05, 32A05, 32H02, 32H50, 37F10, 37F50
}
\keywords{local holomorphic dynamics, involutions, reversible, iteration, resonances}

\def\Label#1{\label{#1}}

% Standard sets

\def\cn{{\C^n}}
\def\cnn{{\C^{n'}}}
\def\ocn{\2{\C^n}}
\def\ocnn{\2{\C^{n'}}}

% Abbreviations

\def\dist{{\rm dist}}
\def\const{{\rm const}}
\def\rk{{\rm rank\,}}
\def\id{{\sf id}}
\def\aut{{\sf aut}}
\def\Aut{{\sf Aut}}
\def\CR{{\rm CR}}
\def\GL{{\sf GL}}
\def\SL{{\sf SL}}
\def\U{{\sf U}}
\def\Re{{\sf Re}\,}
\def\Im{{\sf Im}\,}
\def\im{{\rm im}\,}
\def\span{\text{\rm span}}
\def\tr{{\sf tr}\,}
\def\HOT{{\textup{\small HOT}}}

\def\codim{{\rm codim}}
\def\crd{\dim_{{\rm CR}}}
\def\crc{{\rm codim_{CR}}}

\def\phi{\varphi}
\def\eps{\varepsilon}
\def\d{\partial}
\def\a{\alpha}
\def\b{\beta}
\def\g{\gamma}
\def\G{\Gamma}
\def\D{\Delta}
\def\Om{\Omega}
\def\k{\kappa}
\def\l{\lambda}
\def\L{\Lambda}
\def\z{{\bar z}}
\def\w{{\bar w}}
\def\Z{{\mathbb Z}}
\def\t{\tau}
\def\th{\theta}

\emergencystretch15pt
\frenchspacing

\newtheorem{Thm}{Theorem}[section]
\newtheorem{Cor}[Thm]{Corollary}
\newtheorem{Pro}[Thm]{Proposition}
\newtheorem{Lem}[Thm]{Lemma}

\theoremstyle{definition}\newtheorem{Def}[Thm]{Definition}

\theoremstyle{remark}
\newtheorem{Rem}[Thm]{Remark}
\newtheorem{Exa}[Thm]{Example}
\newtheorem{Exs}[Thm]{Examples}

\def\bl{\begin{Lem}}
\def\el{\end{Lem}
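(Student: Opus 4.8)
The plan is to read ``$F$ is formally reversible'' as the solvability of the reversing equation $R\circ F=F^{-1}\circ R$ inside the group $\widehat{\Aut}(\C^2,0)$ of formally-invertible maps, and to settle it by first reducing $F$ to a formal normal form and then solving that equation one homogeneous degree at a time.

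\textbf{Step 1 (the linear part).} Passing to $1$-jets in $R\circ F=F^{-1}\circ R$ shows that $L:=F'(0)$ is $\GL(2,\C)$-conjugate to $L^{-1}$, so its eigenvalue multiset $\{\lambda,\mu\}$ satisfies $\{\lambda,\mu\}=\{\lambda^{-1},\mu^{-1}\}$. This leaves essentially two regimes: $\lambda\mu=1$ (diagonalizable with $\lambda=\mu^{-1}$, or $\lambda=\mu=\pm1$ with or without a Jordan block) and $\lambda,\mu\in\{1,-1\}$. I would enumerate the finitely many admissible conjugacy classes of $L$ and record in each a candidate linear reverser $\sigma$ — for instance the swap $\sigma(z,w)=(w,z)$ reverses every $L=\mathrm{diag}(\lambda,\lambda^{-1})$ — so that the linear part is never itself the obstruction.

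\textbf{Step 2 (normal forms).} Because reversibility is conjugation-invariant, I would replace $F$ by a Poincar\'e--Dulac formal normal form adapted to $L$. In all but the most degenerate cases this normal form still carries infinitely many coefficients; these are the formal invariants of $F$, and the final answer must be phrased in terms of them. Choosing the normal form so that the resonant directions are as simple as possible — and compatible with $\sigma$ — is itself part of the work.

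\textbf{Step 3 (solving the reversing equation).} Writing $R=\sigma\circ(\id+\HOT)$ and expanding $R\circ F-F^{-1}\circ R=0$ into homogeneous components, at each degree $d\ge2$ one gets a linear cohomological equation $\mathcal L_d(R_d)=\Phi_d$, where $\mathcal L_d$ is built from $L$ and $\sigma$ and $\Phi_d$ depends on the lower-degree parts of $R$ and on the normal-form coefficients of $F$. Solvability is controlled by the cokernel of $\mathcal L_d$, which after the normalization is spanned by resonant monomials. The heart of the argument — and the step I expect to be the main obstacle — is to show that these obstructions vanish \emph{precisely} under the arithmetic/combinatorial condition on the formal invariants of $F$ that the statement asserts, and then to assemble the pieces $R_d$ into an honest formal map. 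The unipotent case $L=\id$ (or a single Jordan block) is the delicate one: there $\mathcal L_d$ has a large kernel and cokernel and all the invariants couple at once, so one needs either an explicit $R$ or a rigidity/elimination argument. Finally, if the statement distinguishes general reversers from reversing \emph{involutions}, I would rerun the degree-by-degree construction with the constraint $R^2=\id$ imposed throughout; this shrinks the obstruction space, and I would expect the hypotheses to encode exactly when both versions — or only the unconstrained one — can be met.
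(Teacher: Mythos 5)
Your outline is the standard ``normal form plus degree-by-degree cohomological equation'' template, and while Steps 1 and 2 do match the paper's opening moves (the linear analysis of \S2.4 and the Poincar\'e--Dulac reduction of Lemma 6.1), Step 3 is where the real content lives and your sketch does not contain the idea that makes the problem tractable. First, note that the paper's hypothesis is that an eigenvalue of $L(F)$ is \emph{not} a root of unity, so the unipotent and $\pm1$ cases you flag as ``the delicate ones'' are excluded from the start (and Example 1.3 of the paper shows the classification genuinely fails there); conversely, in the generic case the Poincar\'e--Dulac form is not ``infinitely many coupled invariants'' but the very rigid shape $F(z)=(z_1\phi(p),z_2\psi(p))$ with $p=z_1z_2$. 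The paper's key device, absent from your proposal, is the semi-conjugacy $P(F)\circ\pi=\pi\circ F$ with $\pi(z)=z_1z_2$ and $P(F)(t)=t\,\phi(t)\psi(t)$: this is a group homomorphism from the extended centraliser to the one-variable group $\5G_1$, and it converts the reversing equation into (i) the already-solved one-variable reversibility problem for $\rho=P(F)$ and (ii) a pair of explicit functional equations $\psi(t)\,\phi(\sigma(t))=1$, $\phi(t)\,\psi(\sigma(t))=1$ with $\sigma=\omega\rho$ satisfying $\sigma^2(t)=\omega^2t$. These are global functional identities, not a sequence of independently solvable cohomological equations; trying to verify them degree by degree gives no visible mechanism for producing the two explicit series of normal forms, for proving that every reverser has finite order (Theorem 4.3 and its corollaries, which rest on the rigidity Lemma 3.4 in one variable), or for the coefficient comparison that forces the exponent $\alpha=\tfrac12$ in $F=(z_1\rho^\alpha(p)/p,\,z_2\rho(p)/\rho^\alpha(p))$ --- the single computation that separates the reversible maps from the non-reversible ones in the second series.

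Concretely, your plan as stated would at best decide, for a fixed $F$ in normal form, whether \emph{some} formal $R$ solves $R\circ F=F^{-1}\circ R$; but the theorem to be proved is a classification: an explicit list of pairwise inequivalent representatives, the identification of all reversers as the finite-order maps $J_c$, and the dichotomy between strong and non-strong reversibility. None of these outputs is reachable from the cohomological-equation formalism without the reduction to one variable and the factorization of the centraliser through the homomorphisms $H$ and $\Phi$, so the heart of the argument is missing rather than merely deferred.
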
}
\def\bp{\begin{Pro}}
\def\ep{\end{Pro}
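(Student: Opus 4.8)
The plan is to argue by classifying $F$ according to its linear part $L=DF(0)$ and reducing each case to a normal-form computation. Formal reversibility is invariant under formal conjugation and forces $L$ to be conjugate to $L^{-1}$, so in $\C^2$ the eigenvalues of $L$ fall into a short list: (a) a reciprocal pair $\{\mu,\mu^{-1}\}$ with $\mu$ not a root of unity; (b) $\mu$ a primitive $q$-th root of unity with $q\ge2$ (including $\mu=-1$); (c) $L$ unipotent with a nontrivial Jordan block; (d) $L$ semisimple with eigenvalues in $\{1,-1\}$. In each case I would first use the Poincar\'e--Dulac theorem to bring $F$ into the relevant formal normal form.

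In case (a) the only resonances are generated by the monomial $u=z_1z_2$, so the normal form is a map that multiplies $u$ by a formal unit and acts on each coordinate axis through a power series in $u$; its formal centralizer is the abelian group generated by the linear torus together with the formal flow along the fibres. Reversibility then becomes an explicit algebraic condition on the coefficients of those power series, and when it holds I would write down the reversing map directly --- a linear involution interchanging the two axes, corrected by a half-time flow --- and check that it squares to $\id$ and conjugates $F$ to $F^{-1}$. I expect this case to be essentially mechanical.

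The substance of the proof, and the step I expect to be the main obstacle, is cases (b)--(d): there the normal form admits infinitely many resonant monomials, its centralizer is no longer abelian, and moduli may appear, so no single closed formula for the conjugating involution can be expected. I would treat these by passing, when possible, to the formal infinitesimal generator $X$ with $\exp X=F$ (and to a suitable substitute otherwise), for which reversibility means the existence of a formal involution $R$ with $R_*X=-X$, and then analyze the obstruction degree by degree in the homogeneous expansion, exploiting the symmetry of the normal form to reduce each step to a finite linear problem; where an involution exists I would exhibit it explicitly, and where it does not I would isolate the obstruction and record the exceptional family. Finally I would collect the per-case lemmas, check that the case list is exhaustive, and observe that, reversibility being conjugation-invariant, every conclusion transfers back to the original coordinates.
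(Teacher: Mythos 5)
Your case division misreads the scope of the statement: the hypothesis is precisely that an eigenvalue of $L(F)$ is \emph{not} a root of unity, so only your case (a) is in play; cases (b)--(d), which you flag as ``the substance of the proof'', are excluded by assumption (and Example~\ref{exa} shows the classification genuinely fails there). The real content therefore lies in the case you dismiss as ``essentially mechanical'', and there your plan has a concrete gap. After Poincar\'e--Dulac the map lies in the centralizer $\5C$ of $D$, consisting of all $(z_1\phi(p),z_2\psi(p))$ with $p=z_1z_2$; this group is \emph{not} abelian (composition involves $\phi_1(\rho_2(p))$, where $\rho=P(F)$ is the induced one-variable series $\rho(t)=t\phi(t)\psi(t)$), so your description of the centralizer as ``the linear torus together with the formal flow along the fibres'' only captures the subgroup generated by $D$ and $\im\Phi$, i.e.\ the maps with $P(F)$ linear. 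You thereby miss exactly the interesting half of the answer: the second series in \eqref{E:normal-0'}, for which $P(F)$ is a nontrivial reversible one-variable series conjugate to $t/(1-kt^k)^{1/k}$.

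Relatedly, your proposed reverser --- ``a linear involution interchanging the two axes, corrected by a half-time flow'', to be checked to ``square to $\id$'' --- presupposes strong reversibility, which fails for that second series: by part (2) of Theorem~\ref{main0} those maps are reversible but admit no reversing involution; all their reversers have order $4k$ (they are reversed by $J_c(z)=c(z_2,z_1)$ with $c^{2k}=-1$). The paper's route, some version of which you would need, is: show any reverser of a $D$-resonant map is itself $D$-inverse-resonant (Proposition~\ref{Theta-res}); push the problem down to one variable via the semiconjugacy $P(F)\circ\pi=\pi\circ F$ with $\pi(z)=z_1z_2$, and invoke the one-variable classification of reversible series to conclude every reverser has finite order and can be linearized to some $J_c$; then solve the resulting functional equations \eqref{E:e-3}, where the decisive step is that the exponent $\alpha$ in \eqref{E:normal-2} is forced to equal $\tfrac12$. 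None of this is visible in your outline, and without it you can neither produce the second family of normal forms nor prove that the two families are pairwise inequivalent.
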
}
\def\bt{\begin{Thm}}
\def\et{\end{Thm}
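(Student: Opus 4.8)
The plan is to work entirely in the formal category. Thus $F$ is a formal biholomorphism of $(\C^2,0)$ and ``formally reversible'' means there is a formal $G$ with $G(0)=0$, $\det dG_0\neq0$, and $G\circ F\circ G^{-1}=F^{-1}$. The first step is to pass to linear parts: with $L=dF_0$, $M=dG_0$ one gets $MLM^{-1}=L^{-1}$, so $L$ is linearly conjugate to $L^{-1}$, which forces the eigenvalue pair $\{\mu,\nu\}$ of $L$ to satisfy $\{\mu,\nu\}=\{\mu^{-1},\nu^{-1}\}$; hence either $\mu\nu=1$ or $\mu,\nu\in\{\pm1\}$, and in the repeated-eigenvalue situations one must also record whether $L$ is semisimple or a single Jordan block. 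This partitions the problem into finitely many cases, and for each I would first put $F$ into a Poincar\'e--Dulac-type formal normal form adapted to $L$, then exploit the relation $GF=F^{-1}G$ on the normal form.

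For the non-parabolic range --- $\mu\nu=1$ with $\mu$ not a root of unity, and more generally whenever the only resonances of $L$ are those generated by $\mu\nu=1$ --- the normal form is essentially one-variable: $F$ is formally conjugate to $(x,y)\mapsto(\mu x\,a(t),\,\nu y\,b(t))$ with $t=xy$ and $a,b$ formal units (with the obvious modification when $\mu$ is a primitive $q$-th root of unity, where one passes to $F^q$ or replaces $t$ by $(xy)^q$). A direct computation gives $F^{-1}$ the same shape, and imposing $G\circ F\circ G^{-1}=F^{-1}$ with $G$ of the analogous form --- or $G=\sigma\circ(\text{normal form})$, $\sigma(x,y)=(y,x)$ --- reduces reversibility to a functional equation relating $a$ and $b$, solvable degree by degree. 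This pins down exactly which such $F$ are reversible and, by symmetrising $G$, shows those that are, are reversible by a \emph{formal involution}. The content here is just bookkeeping on how $a,b$ transform under conjugation.

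The delicate case is the parabolic/unipotent one, $\mu=\nu=1$, to which, essentially, all the root-of-unity cases reduce via iteration. Here the formal classification is infinite-dimensional, so reversibility cannot be read off a finite list; instead I would argue structurally. Put $F$ into a formal normal form governed by its resonant/residual data --- the $\C^2$ analogue of the one-variable formal invariant and iterative residue --- show $F^{-1}$ carries these data twisted by the evident symmetry, and build $G$ by matching the data and then solving, degree by degree, the equations $GF=F^{-1}G$; each degree is a finite-dimensional linear system, solvable precisely because $F$ and $F^{-1}$ are already formally conjugate. To upgrade $G$ to an involution, note $G^2$ commutes with $F$ (since $G^2FG^{-2}=GF^{-1}G^{-1}=F$), so $G^2\in Z(F)$; moreover $G$ normalises $Z(F)$ and fixes $G^2$ under conjugation, so if $Z(F)$ admits a conjugation-equivariant square root --- as it does when it is pro-unipotent, via $\log$ and halving --- one gets $H=\sqrt{G^2}\in Z(F)$ with $GH=HG$, and $H^{-1}G$ is an involutive reverser of $F$.

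The step I expect to be the main obstacle is exactly this promotion to an involution in the degenerate unipotent sub-cases. The difficulty is that the linear part $L_G$ of a reverser is only constrained to conjugate $L_F$ to $L_F^{-1}$, which when $L_F=I$ imposes nothing, so $G^2$ can carry a non-trivial semisimple linear part and need not sit in the pro-unipotent part of $Z(F)$; one must first normalise $L_G$ (e.g. show it can be taken of order $\le2$, reducing to $\sigma$) and then control the unipotent tail. When $F$ is formally embeddable in a flow $Z(F)$ is tame and everything runs smoothly, but the non-embeddable unipotent maps and the single-Jordan-block linear parts require analysing the obstruction $G^2$ by hand, and it is conceivable that some of these produce genuine examples of maps that are formally reversible but not reversible by a formal involution. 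Setting up invariants fine enough to enumerate and settle these sub-cases is where the real work lies; the hyperbolic cases, by contrast, are routine once the normal form is in hand.
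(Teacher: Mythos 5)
Your reduction to the resonant normal form $(x,y)\mapsto(\mu x\,a(t),\nu y\,b(t))$, $t=xy$, via Poincar\'e--Dulac is the same first step the paper takes, and the case analysis on linear parts is fine as far as it goes. But the proposal has two serious problems in the only case the theorem is actually about (eigenvalues $\l,\l^{-1}$ with $\l$ not a root of unity); the parabolic/unipotent analysis you devote most of your effort to is excluded by hypothesis.

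First, you dismiss the hyperbolic-type case as ``routine bookkeeping'' on a functional equation ``solvable degree by degree.'' That functional equation is not always solvable, and deciding exactly when it is solvable is the content of the theorem. The mechanism the paper uses is the semiconjugacy $P(F)\circ\pi=\pi\circ F$ with $\pi(z)=z_1z_2$, which sends $F=(z_1\phi(p),z_2\psi(p))$ to the one-variable series $\rho(t)=t\phi(t)\psi(t)$; a reverser of $F$ must push forward to a reverser of $\rho$, so the one-variable classification of reversibles (reversers of $t+t^{k+1}+\cdots$ are rotations by $\omega$ with $\omega^k=-1$ composed with flow maps, all of finite order) rigidly constrains everything. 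This is what forces every reverser to have finite order, forces the exponent $\alpha$ in the normal form $\bigl(z_1\rho^{\alpha}(p)/p,\,z_2\rho(p)/\rho^{\alpha}(p)\bigr)$ to equal $\tfrac12$, and produces exactly two discrete series of normal forms rather than a continuum. None of this is visible in a blind degree-by-degree induction, which would not tell you which $(a,b)$ actually admit a reverser.

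Second, your claim that ``by symmetrising $G$'' every reversible of this type is reversible by a formal involution is false, and it contradicts part (2) of the statement you are proving. The maps in the second series are reversible only by elements of order $4k$, never by involutions: any involution $\Theta$ reversing such an $F$ must have $L(\Theta)=\pm J$ (it must swap the eigenspaces of $\mathrm{diag}(\l,\l^{-1})$), hence $P(\Theta)(t)=t+\HOT$, hence $P(\Theta)=\id$ since $\Theta^2=\id$; but $\id$ cannot reverse $P(F)\neq P(F)^{-1}$. Your proposed fix $H=\sqrt{G^2}$, $H^{-1}G$ involutive, fails for exactly this reason: the obstruction is detected by the one-dimensional quotient $P$, where $P(G)$ is a nontrivial finite-order reverser of $P(F)$ admitting no square root inside the centraliser that would cancel it to an involution. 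So the dichotomy between the strongly reversible first series and the merely reversible second series --- the main point of the theorem --- is erased by your argument rather than proved.
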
}
\def\bc{\begin{Cor}}
\def\ec{\end{Cor}}
\def\bd{\begin{Def}}
\def\ed{\end{Def}}
\def\br{\begin{Rem}}
\def\er{\end{Rem}}
\def\be{\begin{Exa}}
\def\ee{\end{Exa}}
\def\bpf{\begin{proof}}
\def\epf{\end{proof}}
\def\ben{\begin{enumerate}}
\def\een{\end{enumerate}}
\def\beq{\begin{equation}}
\def\eeq{\end{equation}}

\def\dsty{\displaystyle }

\begin{abstract}%\sideremark{added abstract}
An element $g$ of a group is called {\em reversible}
if it is conjugate in the group to its inverse.  
This paper is about reversibles in the group
$\5G=\5G_2$ of formally-invertible pairs of 
formal power series in two 
variables, with complex coefficients.
The main result is a description
of the generic reversible elements of $\5G_2$. 
We list two explicit sequences of reversibles
which between them represent all the
conjugacy classes of such reversibles.  We show that
each such element is reversible by some element of finite
order, and hence is the product of two elements of finite even 
order.  Those elements that may be reversed by an involution are
called {\em strongly reversible}. We also characterise these.

We draw some conclusions about
generic reversibles in the group $\6G=\6G_2$ of biholomorphic
germs in two variables, and about the factorization of formal maps
as products of reversibles. Specifically,
each product of reversibles reduces to the product of five.
%\sideremark{removed ref to involutions}
\end{abstract}

\maketitle
%\tableofcontents

\section{Introduction}%\sideremark{rewritten}
An element $g$ of a group is called {\em reversible}
if it is conjugate to its inverse, i.e. the conjugate $g^h:=h^{-1}gh$ equals $g^{-1}$ for some $h$ from the group.  We say that $h$ {\em reverses} $g$ or $h$ {\em is a  reverser of} $g$, in this case.
Furthermore, if the reverser $h$ can be chosen to be an involution, $g$ is called {\em strongly reversible}.
(Note that some literature uses the terminology ``weakly reversible'' and ``reversible''
instead of respectively ``reversible'' and ``strongly reversible'' used here.)

Reversible maps have their origin in problems of classical dynamics,
such as the harmonic oscilator, the $n$-body problem or the billiards
and Birkhoff \cite{B} was one of the first to realize their significance.
He observed that a Hamiltonian system with Hamiltonian quadratic in 
the momentum $p$ (such as the $n$-body problem) or, more generally
any system in  of the form 
%\sideremark{adjusted font sizes}
\begin{equation}\Label{system}
\begin{cases}
{\dsty\d q}/{\dsty\d t} = Lp,\\
{\dsty\d p}/{\dsty\d t} = V(p,q),
\end{cases}
(q,p)\in\R^{n}\times\R^{n}, \quad t\in\R,
\end{equation}
where $L$ is linear, admits the so-called
``time reversal symmetry'' $(t,q,p)\mapsto (-t,q,-p)$.
In particular, the flow map $(q_{0},p_{0})\mapsto (q(t),p(t))$,
where $(q(t),p(t))$ is the solution of \eqref{system}
with $(q(0),p(0))=(q_{0},p_{0})$ is reversed by the involution 
$(q,p)\mapsto(q,-p)$.

In CR geometry reversible maps played important role in the celebrated 
work of Moser and Webster \cite{MW},
arising as products of two involutions
naturally associated to a CR singularity.
Such a reversible map is called there ``a discrete version of the Levi form"
and plays a fundamental role
in the proof of the convergence of the normal form
for a CR singularity.
More recently, this map has been used
by Ahern and Gong \cite{AG}
for so-called parabolic CR singularities.

%\medskip\sr{added}
There is a class of diffeomorphisms of $\R^2$
that has received considerable attention: the so-called 
{\em standard maps}, or {\em Taylor-Chirikov maps},
which arise in many applications in physics, and
are strongly-reversible in the group of real-analytic
diffeomorphisms \cite[Section 1.1.4]{MK}.  
The standard maps fix the origin and
are area-preserving. When they are real-analytic, they
thus give reversible elements of our power-series group.

\medskip
This paper aims to classify formally reversible maps in two complex variables,
for which the eigenvalues of the linear part are not roots of unity.
The presence of roots of unity is a well-known obstruction
leading to the presence of additional resonances.

Reversibility is already
understood \cite{O} in the group $\5G_1$
of formal power series maps of $\C$ without constant terms
(see also \cite{AO} for reversibility in the group $\6G_1$ of
biholomorphic map germs of $\C$ fixing the origin):  

\bt\cite[Theorem 5]{O}
A formal power series self-map of $\C$ without constant term is reversible if and only if it is formally conjugate to 
\begin{equation}\Label{phi}
\phi_{\mu,\l,k}(z):=\frac{\mu z}{(1+\l z^{k})^{1/k}}
\end{equation}
for some integer $k\ge 1$, $\mu=\pm1$ and $\l\in\{0,1\}$.
The map \eqref{phi} is reversed by any rotation $z\mapsto \omega z$
with $\omega^{k}=-1$.
\et
Note that for $k=1$, $\phi_{1,1,1}$
is precisely the (unique up to conjugation) map $z\mapsto z+\HOT$
that is conjugate to a projective transformation,
whereas $\phi_{1,1,k}$ is obtained from $\phi_{1,1,1}$
via ``conjugation'' under the non-invertible map $z\mapsto z^{k}$,
i.e.\ 
$(\phi_{1,1,k}(z))^{k}=\phi_{1,1,1}(z^{k})$.

In this paper, our main focus is on the group
$\5G=\5G_2$ of formal power series self-maps of $\C^{2}$
without constant terms. 
We obtain the following formal normal form:

\bt\Label{main0}
A formal power series map of $\C^{2}$ whose linear part
has eigenvalue $\l$ not a root of unity is reversible if and only if it is 
(formally) conjugate either to its linear part
 or to one of the following maps, which are all
pairwise inequivalent under conjugation:
\beq\Label{E:normal-0'} 
(z_1,z_2) \mapsto \left( \l(1+p^k)z_{1}, \frac{1}{\lambda(1+p^k)} z_{2}
\right), \quad
 \left(
\frac{\lambda (1+ p^k)^{\frac1k}}{(1+2p^{k})^{\frac1k}} z_{1},
\frac{1}{\lambda(1+ p^k)^{\frac1k}} z_{2}
\right), \quad p=z_{1}z_{2},
\eeq
where $k\ge1$ is an integer. Furthermore, the following hold:
\begin{enumerate}
\item 
Each map in \eqref{E:normal-0'}
is reversed by any $J_{c}(z_{1},z_{2})=c(z_{2},z_{1})$
such that $c^{2k}=1$ for the first series of maps and $c^{2k}=-1$ for the second.
\item A map in \eqref{E:normal-0'} (with $\l$ not a root of unity) is strongly reversible if and only if 
it is in the first series.
\item Each reverser of a map in \eqref{E:normal-0'} has finite order.
\end{enumerate}
\et
Similarly to \eqref{phi}, the maps \eqref{E:normal-0'}
can be obtained from those corresponding to $k=1$
by a ``conjugation'' under the non-invertible map
$(z_{1},z_{2})\mapsto (z_{1}^{k},z_{2}^{k})$.
The \lq generic type' condition that $\l$ 
not be a root of unity
is important as the following example shows.
(Note that a similar condition also appears in the work
of Moser-Webster \cite{MW}, where it is also shown to be crucial
for the existence of the formal normal form constructed there.)

\be\Label{exa}
The assumption that $\l$ is not root of unity cannot be dropped in Theorem~\ref{main0}. Indeed, the map
\begin{equation}
F(z_{1},z_{2})=\left(
\frac{z_{1}}{1+z_{1}},
\frac{z_{2}}{1+z_{1}}
\right)
\end{equation}
is reversed by the involution
$(z_{1},z_{2})\mapsto(-z_{1},z_{2})$,
(in fact, it is the projectivization of the linear map
$(z_{0},z_{1},z_{2})\mapsto (z_{0}+z_{1},z_{1},z_{2})$
reversed by $(z_{0},z_{1},z_{2})\mapsto (z_{0},-z_{1},z_{2})$).
However, it 
has the second order terms
$(-z_{1}^{2},-z_{2}z_{1})$
that cannot be eliminated by conjugation
and do not occur in a map from \eqref{E:normal-0'}.
Hence, $F$ is not conjugate to any map in \eqref{E:normal-0'}.
\ee

We also obtain polynomial representatives conjugate to the 
maps of the second series:
\bp\Label{polynomial}
For every $k$, the map from the second series in \eqref{E:normal-0'}
is formally conjugate to the polynomial map
$$ 
(z_{1}, z_{2})\mapsto
\left(
\lambda (1+ p^k)z_1,
{\lambda}^{-1}\left(1+ p^k+(2k+1)p^{2k}z_{2}\right)
\right), \quad p=z_{1}z_{2}.
$$
\ep

\br
%\sr{remark extended}
Moser \cite{M1956} showed that real-analytic area preserving
maps germs on  $(\R^2,0)$ having a hyperbolic fixed point at the origin
may be conjugated by a convergent coordinate
change to the normal form  
\begin{equation}\Label{area}
\left(\sigma\cdot x\cdot\exp(w(xy)),
\sigma\cdot y\cdot\exp(-w(xy))\right),
\end{equation}
where
$\sigma=\pm1$ and $w$ is a convergent series. Thus they
are strongly reversible and are conjugate to
an element of our first series in \eqref{E:normal-0'}, with some real $\l\not=\pm1$.
On the other hand, a general map conjugate to \eqref{area} is always strongly reversible but need not be area preserving, whereas maps of the second series in \eqref{E:normal-0'} are not area-preserving.  
\er

Theorem~\ref{main0} and Proposition~\ref{polynomial} are proved in Section~\ref{classes}.

Furthermore, following the arguments of Moser-Webster \cite{MW} we come to
a biholomorphic classification for the maps conjugate to the first series:

\bt\Label{convergence}
If a biholomorphic map germ in $\C^{2}$ is formally conjugate to a map in the first series in \eqref{E:normal-0'} with $|\l|\ne 1$, then 
it is biholomorphically conjugate to it.
\et

Note that any map in the first series in \eqref{E:normal-0'}
is conjugated to itself by (i.e. commutes with)
any map of the form
$(z_{1},z_{2})\mapsto \big(z_{1}\phi(p),{z_2}/{\phi(p)}
\big)$,
where $\phi$ is a formal power series in $t\in\C$ with $\phi(0)\not=0$.
Hence the biholomorphic conjugation map may differ
from the original formal one.

The statement of Theorem~\ref{convergence} does not necessarily hold
for
maps conjugate to the 
second second series in \eqref{E:normal-0'}.
In fact, Example~\ref{divergent} below
shows that a biholomorphic map
conjugate to a map in the second series in \eqref{E:normal-0'}
(and hence formally reversible),
may fail to 
be biholomorphically conjugate to its formal normal form
and may even fail to be 
 biholomorphically reversible.

The statement of Theorem~\ref{convergence} also does not hold
without the assumption $|\l|\ne 1$
due to the remarkable theorem of Gong \cite[Theorem~1.1]{G}
stating the existence of a biholomorphic map
$F(z_{1},z_{2}) =(\l z_{1},\l^{-1}z_{2}) + \HOT$
with $\l$ not a root of unity and $|\l|=1$
such that 
$F$ is formally reversible by an involution
(and hence $F$ is formally conjugate to a map in the first series in \eqref{E:normal-0'})
but is not reversible by a biholomorphic involution
(and hence $F$ is not biholomorphically conjugate to its formal normal form).

%We further show that
%each such element is reversible by some element of finite even
%order, and hence is the product of two elements of finite even order.  
%Those elements that may be reversed by an involution are
%called {\em strongly reversible}. We also characterise these.

We shall further discuss reversibility and centralisers in some related
groups, along the way.  We draw some conclusions about
generic reversibles in the group $\6G_2$ of biholomorphic
germs in two variables (Theorem~\ref{biholo}), and about the factorization of maps
as products of reversibles.
Since a reversible element of $\5G_{2}$
has linear part of determinant $\pm1$ (see \S\ref{linear}),
any product of them has the same property.
Conversely we show:

\bt\Label{prod}
Each element of $\5G_{2}$  whose linear part has determinant
$\pm1$ is the product of at most four reversibles and an involution.
\et

Theorem~\ref{prod} is proved in \S\ref{factor}.

\section{Notation and Preliminaries}
\subsection{The map $L$}\label{l-notation}
A typical element $F\in\5G$ takes the form
$$F(z)=(F_{,1}(z),F_{,2}(z))=(F_{,1}(z_1,z_2),F_{,2}(z_1,z_2))$$
where each $F_{,j}(z)$ is a power series in two variables
having complex coefficients, and no constant term.
We shall refer to such series $F$ as maps, even though
they may be just \lq formal', i.e. 
the series may fail to converge at any $z\not=0$.

We usually write the formal composition of two maps
$F,G\in\5G$ as FG. We also write the product of two complex
numbers $a$ and $b$ as $ab$, but in cases where there might
be some ambiguity we use $a\cdot b$.
 
The series $F$ may be expressed
as a sum  
$$F = \sum_{k=1}^\infty L_k(F), $$
where $L_k(F)$ is homogeneous of degree $k$. 
We abbreviate $L_1(F)$ to $L(F)$. This term,
the {\em linear part of} $F$, belongs to
the group
$\GL=\GL(2,\C)$.
We have inclusions
$\GL \to \5G$,
and $L:\5G\to \GL$ is a group homomorphism.

The elements of the kernel of $L$ 
are said to be {\em tangent to the identity}.

%\sideremark{Changed to conjugation by an element of $\5H\cap \ker L$}
\subsection{Elements of Finite Order}
We note the following, in which $\5G_n$ denotes
the group of all invertible formal power series self-maps of $\C^{n}$
without constant coefficients.  (Various cases of this lemma,%\sr{added text}
and the idea of its simple proof,
are well-known.  The case $n=1$ is
very classical. For holomorphic maps
in $n$ variables, see \cite[p.298]{AR}.
In the differentiable category, 
Montgomery and Zippin \cite{MZ} 
proved the  local conjugacy of any involution to
its derivative at a fixed point.  
The equivalent global question is more 
delicate.)
\begin{Lem}\Label{L:fol}
Let $n\in\N$ and let $\5H$ be a subgroup of $\5G_n$
such that\\
(1) $L(F)\in \5H$ whenever $F\in\5H$, and
\\
(2) $\5H\cap\ker L$ is closed under
convex combinations, i.e.
if $F_1,F_2\in\5H$, $L(F_1)=L(F_2)=\id$
and $0<\alpha<1$, then $\alpha F_1+(1-\alpha)F_2\in\5H$.
\\
Suppose $\T\in\5H$ has finite order. Then $\T$ is conjugated
by an element of $\5H\cap \ker L$ to its linear part $L(\T)$.
\end{Lem}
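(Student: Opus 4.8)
The plan is to use the classical averaging (Bochner linearization) argument, taking care that the conjugating map it produces stays inside $\5H$. Let $m$ be the order of $\Theta$, so $\Theta^m=\id$, and set $A:=L(\Theta)$; since $L:\5G_n\to\GL(n,\C)$ is a group homomorphism, $A^m=L(\Theta^m)=\id$, so $A$ has finite order. The candidate conjugator is the averaged map
\[
H:=\frac1m\sum_{j=0}^{m-1}\Theta^{j}A^{-j}.
\]
This is a finite sum, taken coefficient-wise as in hypothesis~(2), of formal power series maps without constant term, hence an element of $\5G_n$ once we know its linear part is invertible.

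First I would verify the intertwining relation $\Theta H=HA$: composing on the left by $\Theta$ and reindexing gives $\Theta H=\frac1m\sum_{j=1}^{m}\Theta^{j}A^{-j}A$, and since $\Theta^m=\id=A^{-m}$ the $j=m$ term equals the $j=0$ term of $HA=\frac1m\sum_{j=0}^{m-1}\Theta^{j}A^{-j}A$, so the two sums agree and $\Theta H=HA$. Next, because $L$ is a homomorphism, $L(\Theta^{j}A^{-j})=A^{j}A^{-j}=\id$ for each $j$, whence $L(H)=\id$; in particular $H$ is tangent to the identity, its linear part lies in $\GL(n,\C)$, and so $H$ is invertible in $\5G_n$. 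Then $\Theta H=HA$ rearranges to $H^{-1}\Theta H=A=L(\Theta)$, the conjugacy we want.

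It remains to see $H\in\5H\cap\ker L$, and this is exactly what hypotheses (1) and (2) are designed to give. By (1), $A=L(\Theta)\in\5H$; since $\5H$ is a subgroup, every $\Theta^{j}A^{-j}$ lies in $\5H$, and by the computation above each of them in fact lies in $\5H\cap\ker L$. Now $H$ is the convex combination of these $m$ maps with all weights $1/m$, and such a combination can be built from a sequence of binary convex combinations with weights strictly between $0$ and $1$: setting $Y_1:=\id$ and $Y_k:=\tfrac1k\,\Theta^{k-1}A^{-(k-1)}+\tfrac{k-1}{k}\,Y_{k-1}$ for $2\le k\le m$ gives $Y_k=\frac1k\sum_{j=0}^{k-1}\Theta^{j}A^{-j}$, so $Y_m=H$. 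Applying hypothesis~(2) at each step — and noting the linear part stays $\id$, so we remain in $\ker L$ — shows $Y_1,\dots,Y_m\in\5H\cap\ker L$, in particular $H\in\5H\cap\ker L$. I do not expect a genuine obstacle here; the one point needing a little care is precisely this reduction of the $m$-fold average to iterated binary averages, so that hypothesis~(2), which is stated only for pairs of maps, applies, together with the routine check that membership in $\ker L$ persists along the way.
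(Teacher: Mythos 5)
Your overall strategy (averaging over the cyclic group generated by $\Theta$, plus the careful reduction of the $m$-fold average to iterated binary convex combinations so that hypothesis (2) applies) is the same as the paper's, but your averaging operator is written in the wrong order, and this breaks the key intertwining identity. You set $H=\frac1m\sum_{j}\Theta^{j}A^{-j}$ and claim $\Theta H=\frac1m\sum_{j}\Theta^{j+1}A^{-j}$ by ``composing on the left by $\Theta$ and reindexing''. Composition of formal maps distributes over sums only in the \emph{outer} factor: $(\sum_i F_i)\circ G=\sum_i F_i\circ G$ always holds, but $G\circ(\sum_i F_i)=\sum_i G\circ F_i$ holds only when $G$ is linear. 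Since $\Theta$ is not linear, $\Theta\circ H$ cannot be expanded term by term, and in fact $\Theta H=HA$ is genuinely false for your $H$, not just unproved. Concretely, in one variable take the involution $\Theta(z)=-z/(1+z)$, so $A(z)=-z$ and $m=2$; then $H(z)=\frac12\left(z+\Theta(-z)\right)=\frac{z(2-z)}{2(1-z)}$, and one computes $\Theta(H(z))=-\frac{2z-z^{2}}{2-z^{2}}$ while $H(A(z))=-\frac{2z+z^{2}}{2+2z}$; these agree up to order $3$ but differ in the coefficient of $z^{4}$ ($\frac14$ versus $\frac12$). So your $H$ does not conjugate $\Theta$ to $A$.

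The fix is to put the linear iterates on the outside, as the paper does: with $T=L(\Theta)$ and $\Theta^{m}=\id$, set $H=\frac1m\sum_{j=0}^{m-1}T^{-j}\Theta^{j}$. Then $H\Theta=\frac1m\sum_{j}T^{-j}\Theta^{j+1}$ (the outer factor always distributes over right-composition), and $T^{-1}(H\Theta)=\frac1m\sum_{j}T^{-(j+1)}\Theta^{j+1}=H$, where now the linearity of $T^{-1}$ is exactly what lets you push it inside the sum, and the $j=m$ term returns to the $j=0$ term because $T^{m}=L(\Theta^{m})=\id$. This gives $TH=H\Theta$, i.e.\ $H^{-1}TH=\Theta$, so $H^{-1}$ --- which also lies in $\5H\cap\ker L$, this being a subgroup --- conjugates $\Theta$ to $L(\Theta)$. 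Everything else in your write-up (membership of each summand in $\5H\cap\ker L$ via hypothesis (1) and the group property, invertibility of $H$ from $L(H)=\id$, and the chain $Y_{1},\dots,Y_{m}$ of binary convex combinations invoking hypothesis (2)) goes through verbatim for this corrected $H$, and indeed supplies details the paper compresses into ``the assumptions imply that $H\in\5H$''.
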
 
\begin{proof} Suppose $\T^k=\id$. Let $T=L(\T)$, and form
\begin{equation}\Label{inverse}
 H = \frac1{k}\left(
	\id + T^{-1}\T+T^{-2}\T^2+\cdots+T^{-(k-1)}\T^{k-1}
\right).
\end{equation}
Then the assumptions imply that $H\in\5H$,
and a short calculation shows that $T^{-1}H\T = H$,
so that $T^H=\T$, as required.
\end{proof}

This applies to $\5H=\5G$, $\5G\cap\ker L$,
 $\5G\cap\ker(\det\circ L)=L^{-1}(\SL(2,\C))$,
$L^{-1}(\U(2,\C))$ (and, more generally to $L^{-1}(H)$
for any subgroup $H\le\GL$), to the corresponding
subgroups of biholomorphic germs (i.e. series that converge
on a neighbourhood of the origin) and to other subgroups introduced
below.  It applies to the intersection of any two groups
to which it applies.

In particular, in any $\5H$ to which the lemma applies,
each involution is conjugate to one of the linear involutions
in the group.  In $\GL=\GL(n,\C)$, a matrix is an involution
if and only if it is diagonalizable with eigenvalues $\pm1$.

\subsection{Reversibles in one variable}
Here we collect facts about reversibles in one variable
that will be used throughout the paper.

\bl
A map $h\in \5G_{1}$ is a reverser of the map \eqref{phi} with $\mu=1$ and $\l\ne0$
if and only if it is of the form
\begin{equation}
h_{\omega,\nu}(z)=\frac{\omega z}{(1+\nu z^{k})^{1/k}},
\end{equation}
where $\nu,\omega\in\C$ are arbitrary with $\omega^{k}=-1$.
\el
\bpf
Since $\phi_{1,\l,k}$ is the inverse of $\phi_{1,-\l,k}$,
the map $h_{\omega,0}(z)=\omega z$ reverses $\phi_{1,\l,k}$ for any $\omega$ with $\omega^{k}=-1$.
Furthermore, since any map $h_{1,\nu}$ commutes with $\phi_{1,\l,k}$, we have
$$h_{\omega,\nu}^{-1} \phi_{1,\l,k} h_{\omega,\nu} =  
h_{1,\nu}^{-1} h_{\omega,0}^{-1} \phi_{1,\l,k} h_{\omega,0} h_{1,\nu} 
=h_{1,\nu}^{-1} \phi_{1,\l,k}^{-1} h_{1,\nu} = \phi_{1,\l,k}^{-1}$$
and therefore any $h_{\omega,\nu}$ reverses $\phi_{1,\l,k}$.
Vice versa, if $h$ reverses $\phi_{1,\l,k}$, comparing the coefficients
of $z^{k+1}$ in the equation 
\begin{equation}\label{hphi}
h\phi_{1,-\l,k} = \phi_{1,\l,k}h
\end{equation}
yields $h(z)=\omega z+\textup{O}(z^2)$ for some $\omega^{k}=-1$.
Furthermore, we can choose $\nu$ such that
$g:=h_{\omega,\nu}^{-1} h$ has no coefficient of $z^{p+1}$.
Then it follows from \eqref{hphi} that $g$ commutes with $\phi_{1,-\l}$.
We claim that $g=\id$. Suppose on the contrary, that $g(z)=z+az^{\b}+\cdots$
with $a\ne 0$ and $\b\ne p+1$.
Then comparing the coefficients of $z^{\beta+k+1}$ 
in the identity $g\phi_{1,\l,k}=\phi_{1,\l,k}g$ yields a contradiction.
Hence $g=\id$ proving that $h$ is of the form \eqref{hphi}.
\epf

\bc\Label{1-finite}
Any reverser of the map \eqref{phi} with $\mu=1$ and $\l\ne0$
is of finite order at most $2k$. 
\ec
\bpf
Since $h_{\omega,0}$ reverses $\phi_{1,\l,k}$, we have
$$h_{\omega,\nu}^{2k}=(h_{\omega,0}\phi_{1,\nu,k} h_{\omega,0}\phi_{1,\nu,k})^{k}
=(h_{\omega,0} h_{\omega,0}   \phi_{1,\nu,k}^{-1} \phi_{1,\nu,k})^{k}
=h_{\omega,0}^{2k}=\id.$$
\epf

\subsection{Linear reversibles}\Label{linear}
%\sideremark{added}
Reversibility is preserved by homomorphisms,
so a map $F\in\5G_{n}$ is reversible only if 
$L(F)$ is reversible in $\GL(n,\C)$. 
Classification of linear reversible maps is simple.
Suppose $F\in\GL(n,\C)$ is reversible.
Since the Jordan normal form of $F^{-1}$
consists of blocks of the same size as $F$
with inverse eigenvalues, 
the eigenvalues of $F$ that are not $\pm1$
must split into groups of pairs $\l,\l^{-1}$.
Furthermore, we must have the same number of Jordan blocks
of each size for $\l$ as for $\l^{-1}$.
Vice versa, if the eigenvalues of $F$ are either $\pm1$
or split into groups of pairs $\l,\l^{-1}$
with the same number of Jordan blocks of each size,
then both $F$ and $F^{-1}$ have the same Jordan normal form
and are therefore conjugate to each other.

\subsection{The Group $\GL(2,\C)$}
In particular, a linear map
is reversible in $\GL(2,\C)$
if and only if 
it is an involution or is conjugate to 
%\sideremark{corrected}
$\left(\begin{matrix}1&1\\0&1\end{matrix}\right)$,
$\left(\begin{matrix}-1&1\\0&-1\end{matrix}\right)$ 
or to
a matrix of the form
\beq\Label{E:glg}
 \left(\begin{matrix}\mu&0\\0&
\displaystyle\mu^{-1}\end{matrix}\right),
\eeq
for some $\mu\in\C^{\times}$.  
Thus each reversible $F$ is conjugate 
in $\5G$ (by a linear conjugacy) to
a map having one of these types as its linear part.

The collection of maps \eqref{E:glg}
forms an abelian subgroup, which we denote by $D$.
The element \eqref{E:glg} has infinite order
precisely when $\mu$ is not a root of unity,
and this is what we regard as the generic situation.
In this paper, we are going to concentrate on 
the following question:

\medskip
Q. Which elements $F\in\5G$ for which $L(F)$
has an eigenvalue that is not a root of
unity are reversible?

\medskip
To approach this, we are going to begin by
studying the centraliser of $D$ in $\5G$.
In fact, the classical Poincar\'e-Dulac
Theorem \cite[Section 4.8, Theorem 4.22]{PD}
implies that any map $F\in\5G_{n}$
is conjugate to a map in the centralizer of the linear part $L(F)$.
In case $L(F)={{\sf diag}}(\mu,\mu^{-1})\in D$ and $\mu$
is not a root of unity, it is easy to see that the centralizer of $L(F)$
coincides with the centralizer of $D$.
We shall, incidentally, discover some classes
of reversibles $F$ that have $L(F)=\id$, although our focus
is not on these non-generic examples.  Apart from the maps
given in \eqref{E:normal-0'}, which are tangent to
the identity when $\lambda=1$, we also meet the maps
given in Equation \eqref{E:normal-2} below.

\subsection{The Groups $\5C=C_D(\5G)$ and $\5E=E_D(\5G)$}\label{cent-section}
A map $F\in\5G$ is in the  {\em centralizer} of $D$
 if and only if it commutes with any particular
element of $D$ of infinite order, and if and only if
it takes the form $F=M(\phi,\psi)$, given by
\beq \Label{E:5C}
 M(\phi,\psi)(z) = (z_1\phi(z_1z_2),z_2\psi(z_1z_2)),
\eeq
where $\phi(t)$ and $\psi(t)$ are series in one
variable such that $\phi(0)\not=0\not=\psi(0)$ (i.e. 
they are series having nonzero constant term).  We denote by $\5C$ the group of all such maps:
$$\5C:=C_D(\5G)=\{M(\phi,\psi)(z) : \phi,\psi\in \5G_{1}, \phi(0)\not=0\not=\psi(0)\}.$$

In the following we shall
adopt the notation
$$p:=z_1z_2, \quad J(z)= \2z:=(z_2,z_1).$$
Then $J$ reverses every $\L\in D$, i.e.\ $J^{-1}\L J = \L^{-1}$.
Furthermore, a map $\T\in\5G$ reverses each (or any fixed infinite-order)
element $\L\in D$
if and only if $J\T$ commutes with $\L$ and hence, if and only if 
\beq \Label{E:5R}
\T(z) = (z_2\phi(p),z_1\psi(p)),
\eeq
where $\phi(t)$ and $\psi(t)$ are as before.
This may be written as
$\T=M(\phi,\psi)J=J M(\psi,\phi)$.
We denote the collection of such $\T$ by $\5R$:
$$\5R:=\{J F : F\in \5C \}=\{FJ : F\in \5C \}$$
and note that it is both a left and a right coset of $\5C$.

%\sideremark{Definition of extended centraliser?}
The {\em extended centraliser} $E_S(G)$ of a subset $S$ in a group
%\sideremark{def added}
$G$ is the set of all elements of the group that
either commute with all the elements of the subset
or reverse them all.
We denote the extended centraliser of $D$
in $\5G$, by $\5E$:
$$\5E:=E_{D}(\5G)=\5C\cup\5R.$$
This is a group, in which $\5C$ has index $2$.

Lemma \ref{L:fol} applies to $\5H=\5C$ and to $\5H=\5E$:

\bl\Label{L:folE} If $F\in\5E$ has finite order, then there
exists $H\in\5C$ such that $F^H$ is linear.
\el

\subsection{The Homomorphisms $P$, $H$, and $\Phi$}\label{homs}
To $F\in\5E$ we associate the one variable power series 
$$P(F)=\rho(t)\in\5G_1, \quad \rho(t): = t\cdot\phi(t)\cdot\psi(t),$$
where $F$ is given by the right-hand side of either \eqref{E:5C} or \eqref{E:5R}.
Note that, following our convention, we use $\cdot$ for the (formal) pointwise multiplication 
of power series.
Denoting 
$$p=\pi(z_1,z_2):=z_1z_2,$$ 
we have the basic property
%\sideremark{added}
\begin{equation}\label{diagram}
P(F)\circ\pi = \pi\circ F,
\end{equation}
i.e.\ $P(F)$ is \lq\lq semi-conjugate" to $F$ via $\pi$.
Property \eqref{diagram} determines $P(F)$ uniquely.
A routine calculation using \eqref{diagram} proves:
\bl $P:\5E\to\5G_1$ is a group homomorphism.
\el

The map $P$ and its homomorphic property
have been fundamentally used in \cite{BZ} in
a more general context of one-resonant maps.

The kernel of $P$
is the set of maps $F$ of the form
$$ F(z) = 
\left(z_1\phi(p), \frac{z_2}{\phi(p)}\right)
\textup{ or }
\left(z_2\phi(p), \frac{z_1}{\phi(p)}\right),$$
and $\5C\cap\ker P$ is abelian.
% and is naturally
%isomorphic the multiplicative group of all formal power series $\phi(t)$.

Define the ``lifting'' homomorphism map $H:\5G_1\to\5C$ by 
\begin{equation}\Label{H}
H(\chi)(z) = \left( \frac{z_1\chi(p)}p, z_2\right),
\end{equation}
whose basic property is $P(H(\chi))=\chi$.
In particular, the restriction $P|\5C$
is surjective.
We also consider the ``lifting'' to the second argument given by
$$ (JH(\chi)J)(z) = \left(z_1,\frac{z_2\chi(p)}p\right)$$
that also has the basic property
$P(JH(\chi)J)=\chi$.

\medskip
We denote by $\5F_1^\times$ the multiplicative group
of the ring of formal power
series $\phi(t)=a_0+a_1t+\cdots$ in one variable
having $a_0\not=0$, 
(in which the group operation corresponds to convolution on the coefficients, the formal
%\sideremark{rephrased, and moved bracket )}
equivalent of pointwise multiplication,
%\sideremark{added reference to pointwise multiplication}
as opposed to composition). 
The map
$$\Phi:
\left\{
\begin{array}{rcl}
\5F_1^\times &\to& \5C,\\
\phi &\mapsto& 
\left(
z_1\phi(p),
\displaystyle \frac{z_2}{\phi(p)}
\right),
\end{array}
\right.
$$
is a group isomomorphism onto its image, which is  equal to $\5C\cap\ker P$.

\medskip
We note that each $F\in\5C$ has a unique factorization
in the form 
$H(\rho)\Phi(\psi)$, and another in the form
$JH(\rho)J\Phi(\phi)$. 
However, $\5C$ is not
the {\em direct} product of $\im H$ and $\im\Phi$.

\section{Centralisers in $\5E$}

\subsection{Centralisers in $\5C$}

A routine calculation gives:
\bl 
Let $F_j(z) = (z_1\phi_j(p),z_2\psi_j(p))$,
and $\rho_j=P(F_j)$, for $j=1,2$. Then
\beq\Label{E:product}
 F_1(F_2(z)) =
\left(
z_1\phi_2(p)\phi_1(\rho_2(p)),
z_2
\psi_2(p)\psi_1(\rho_2(p)) 
\right).
\eeq
\el
This immediately yields:
\bl\label{32}
 Let $F_j(z) = (z_1\phi_j(p),z_2\psi_j(p))$,
and $\rho_j=P(F_j)$, for $j=1,2$. Then
$F_1F_2=F_2F_1$ if and only if
\beq\Label{E:comm}
\left\{
\begin{array}{rcl}
\phi_2(p)\phi_1(\rho_2(p)) &=&
\phi_1(p)\phi_2(\rho_1(p))\\
\psi_2(p)\psi_1(\rho_2(p)) &=&
\psi_1(p)\psi_2(\rho_1(p))
\end{array}.
\right.
\eeq
\el

\bl\Label{L:key} Let $F_1\in\im\Phi$ $(=\5C\cap\ker P)$ and $F_2\in\5E$.  Suppose
$F_1F_2=F_2F_1$. Then either $F_1$ is linear (and hence is in the group $D$ of matrices \eqref{E:glg}) or $P(F_2)$ has finite order.
\el

The proof is based on the following useful fact in one variable:
%\sideremark{Lemma added}
\bl\label{onev}
Let $\phi(t)$ be a formal power series in one variable
that is invariant under a formal change of variables $t'=\rho(t)\in\5G_{1}$ with $\rho(0)=0$, i.e.\
\begin{equation}\label{inv}
\phi(\rho(t)) =\phi(t).
\end{equation}
Then either $\phi=\const$ or $\rho$ has finite order.
\el

\bpf
Suppose that $\phi\ne\const$, i.e.\
$$ \phi(t) = \phi(0) + \alpha t^k + \sum_{j=1}^\infty \alpha_j t^{k+j},\quad \a\ne0.$$
Since $\rho(t) = ct +\cdots$, comparing the coefficients
of $t^k$ in \eqref{inv} gives $c^k=1$. 
Then, replacing $\rho$ with $\rho^{k}$, we may assume $c=1$.
It clearly suffices to show that $\rho=\id$.
Assuming the contrary, we have
$$ \rho(t) = t(1 + \beta t^r +\HOT),\quad \b\ne0,$$
where 
$\HOT$ denotes terms of degree greater than $r$.  Now
$$\begin{array}{rcl}
 \phi(\rho(t)) &=& 
1+ \alpha\rho(t)^k +\sum_{j=1}^\infty \alpha_j \rho(t)^{k+j}
\\
&=& 1+ \alpha t^k(1+ k\beta t^r) + \sum_{j=1}^r \alpha_j t^{k+j}+\HOT
\end{array}
$$
where $\HOT$ denotes terms of degree greater than $k+r$, so comparing
coefficients of $t^{k+r}$ in \eqref{inv}, we get 
$\alpha\beta k=0$, a contradiction.
\epf

\bpf[Proof of Lemma~\ref{L:key}]
Replacing $F_{2}$ with $F_{2}^{2}$ if necessary,
we may assume that $F_{2}\in\5C$.
In the notation of Lemma~\ref{32} we have $\rho_{1}(t)=t$
and therefore the first identity in \eqref{E:comm}
implies $\phi_1(\rho_2(t)) =\phi_1(t)$.
Then by Lemma~\ref{onev},
either $\phi_{1}=\const$ and then $F_{1}\in D$ or $\rho_{2}=P(F_{2})$ has finite order.
\epf

\bc\Label{C:key} If commuting elements $F_1\in\im\Phi$
and $F_2\in\5C$ are tangent to the identity, then
$F_1=\id$ or $F_2\in \im\Phi$. In fact, it suffices to assume
that $F_{1}$ and $P(F_{2})$ are tangent to the identity.
\ec
\bpf By Lemma~\ref{L:key}, either $F_{1}$ is linear and hence $F_{1}=\id$
or $P(F_{2})$ has finite order and hence is the identity.
 \epf

\bc The centraliser of $\im\Phi$ in $\5E$ is $\im\Phi$.
\ec
\bpf Recall that $\5E=\5C\cup \5R$ (see Section~\ref{cent-section} for the notation).
Suppose $F\in\5E$ commutes with all elements of
$\im\Phi$. If $F\in\5R$, then it does not commute with,
for instance, 
$\left(\begin{matrix}2&0\\0&\frac12\end{matrix}\right)$,
so $F$ must belong to $\5C$, in particular, $\L:=L(F)\in D$.
Since $\Lambda$ commutes with each element of $\im\Phi$,
it follows that $F_2=\Lambda^{-1}F$ commutes with each element
of $\im\Phi$, and is tangent to the identity.
Taking $F_1$ to be any element of $\im\Phi$
that is tangent to the identity but not equal to the identity,
and applying the last corollary, we conclude that
$F_2\in \im\Phi$.
\epf 

\bc $\im\Phi$ is a maximal abelian subgroup of $\5E$.
\ec

\section{Reversibility in $\5E$}

Involutions are trivially reversible (by the identity), in any group. 
By Lemma \ref{L:folE},
each involution in $\5C$ is conjugate to
$(\pm z_1,\pm z_2)$,
and each proper involution in $\5R$ is conjugate to
$\pm(z_2,z_1)$.  It is not altogether
clear what other elements 
of $\5R$ are reversible in $\5E$, but we do not need
to know this, for our purposes.  We concentrate
on describing the elements of $\5C$ reversible in $\5E$.  

\subsection{Elements of $\im\Phi$}
\bl Each element of $\im\Phi$ is strongly-reversible 
in $\5E$.
\el
\bpf Such maps are obviously reversed by 
the involution $J$. 
\epf  

%\sideremark{Original Theorem 4.2 deleted, and material rearranged}

\subsection{Elements of $\5C$}
We remark that if $F\in\5C$ is reversible,
then $\det(L(F))=\pm1$, so $P(F)(t)=\pm t+\cdots$.
Thus $L(F)$ takes the form 
$\left(\begin{matrix}\pm\mu&0\\0&
\mu^{-1}\end{matrix}\right)$
for some nonzero $\mu\in\C$, so $L(F^2)\in D$.

\bt\Label{4.3}
 Let $F\in\5C$ be reversed by $\T\in\5E$, i.e.\
\begin{equation}\Label{reverse}
\T^{-1} F \T = F^{-1}.
\end{equation}
Then either
$P(F)$ has finite order or some power
of $\T$ belongs to the matrix group $D$. 
\et

We use the following simple relation between reversers and commuting maps:
%\sideremark{added}
\bl\label{commute}
If $F$ is reversed by $\Theta$, then $F$ is reversed by any $\Theta^m$ with $m\in\Z$ odd
and commutes with any $\Theta^m$ with $m\in\Z$ even.
\el

\bpf
The statement is straightforward for $m\ge 0$.
Taking the inverse of \eqref{reverse}
yields $\T^{-1} F^{-1}\T=F$ showing the statement for $m=-1$
and therefore for any $m<0$.
\epf

\bpf[Proof of Theorem~\ref{4.3}] Replacing $F$ by $F^2$, if need be,
we may assume $L(F)\in D$. 
Suppose $P(F)$ has infinite order.  Since $P(F)$
(which takes the form $t+\cdots$)
is reversed by $P(\T)$, it follows from Corollary~\ref{1-finite}
that $P(\T)$ has
finite order, so we may choose 
$k\in\N$ such that $\T^{2k}\in\im\Phi$.
Since $\Theta^{2k}$ also commutes with $F$
by Lemma~\ref{commute},
we may apply Lemma \ref{L:key} with
$F_1=\T^{2k}$ and $F_2=F$, and conclude
that $\T^{2k}\in D$. 
\epf

%\sideremark{If $P(F)$ is conjugated to identity, it is identity itself, so we may assume $\chi=\id$ and the proof is simplified}

\bc\Label{C:FO}
 Suppose $F\in\5C$ has linear part $L(F)\in D$, with $L(F)\not=\pm\id$,
and is reversible in $\5E$.  Then (1) each reverser
of $F$ lies in $\5R$, and (2) $F$ may be reversed in $\5E$
by some element of finite order.
\ec
\bpf Suppose  $\T\in\5E$ reverses $F$. Then $L(\T)$
reverses $L(F)$, which
takes the form \eqref{E:glg},
with $\mu\not=\pm1$. Thus $L(\T)$  has to interchange
the eigenvectors of $\mu$ and $1/\mu$, and must
take the form
$(az_2,bz_1)$
for some nonzero $a$ and $b$. In particular,
the reverser cannot belong to $\5C$,
so part (1) is proved. 
Composing with an element
of $D$, we may assume that the reverser $\T$
has linear part $(cz_2,cz_1)$. Then Theorem~\ref{4.3}
tells us that $1^\circ$ $P(F)$ has finite order
or $2^\circ$ some power $\T^{2k}\in D$.

In case $1^\circ$, $P(F)$ is linearizable by Lemma \ref{L:fol}.
Furthermore, since $L(F)\in D$, the map $P(F)$ is also tangent to the identity.
Hence we must have $P(F)=\id$ and therefore $F\in \im \Phi$, 
%\sideremark{im=image, not Im=imaginary part}
which is reversed by $J$.

In case $2^\circ$, $\T^{2k}=L(\T)^{2k}=c^{2k}\cdot\id$, 
but the only multiples
of the identity belonging to $D$ are $\pm\id$,
so $\T$ has order at most $4k$.
\epf

%\sideremark{Corrected: the original statement was not true, the proof simplified, assumption of infinite order was too strong}
%\sideremark{Simpler proof, OK. But I think the revised statement implied the old one, so I put it back. See remark after proof. }
\bc\Label{C:rev-FO} Suppose $F\in\5C$ has linear part $L(F)\in D$ with $L(F)\ne\pm\id$,
and is reversible in $\5E$. Then either $F$ is linear or
each reverser of $F$ in $\5E$ has finite order.
\ec

\bpf We have seen this in case $2^\circ$ (see Proof of Corollary~\ref{C:FO}).
In case $1^\circ$, $F\in \im \Phi$ is reversed by
$J$, so each other reverser of $F$ takes the form
$JG$, where $G$ commutes with $F$, and $G$
belongs to $\5C$ since $L(F)$ has two different eigenvalues. 
Then for $\L:=L(G)\in D$, the map $\L^{-1}G$ is tangent to the identity
and commutes with $L(F)^{-1}F$.
%\sideremark{modified}
If $F\not=L(F)$, then Corollary \ref{C:key} tells us
that $\Lambda^{-1}G\in\im\Phi$, hence $G\in\im\Phi$,
so $(JG)^2=JGG^{-1}J=\id$.
\epf

We draw a further corollary from the proof  of Corollary \eqref{C:FO}:
\bc Suppose $F\in\5C$ is reversible in $\5E$ and has linear part $L(F)\ne\pm\id$.  Then $F$ is conjugate in $\5E$
to a map reversed by one of the maps
$J_c(z)=c\2z$, where $c$ is a root of unity.
\ec
\bpf In case $1^\circ$, we may take $c=1$.  In case
$2^\circ$, multiplying with suitable element $\L\in D$, 
$\T$ can be assumed to be of finite order and satisfy $L(\T)= J_c$, 
where $c$ is a root of unity. The statement now follows from Lemma~\ref{L:fol}.
\epf

\section{Maps Reversed by $J_c(z)=c\2z$}
Here we fix one of the linear maps $J_c\in\5R$ 
identified in the last subsection, and we describe
all the elements of $\5C$ that it reverses.
We assume that $c$ is a root of unity,
and set $\omega=\bar c^2$.
Let $k$ be the least natural number with $\omega^{2k}=1$
(i.e.\ $k$ is the order of $\omega^{2}$).

\subsection{} If a map $\Theta$ reverses two commuting maps 
$F$ and $G$, then it also reverses their composition $FG$.
Thus if $J_c$ reverses a map $F\in\5C$ having linear
part $\Lambda$, then, since $J_c$ reverses $\Lambda$,
it also reverses  the map $G=\Lambda^{-1}F$, which is tangent
to the identity. Thus each $F\in\5C$ reversed by
$J_c$ factors as $\Lambda G$, where $G\in\5C$ is tangent
to the identity, and is reversed by $J_c$.
 
\subsection{}\Label{5.2}
If $J_c$ reverses an $F\in\5E$. Then $\rho=P(F)$
is reversed by $c^2t=\bar\omega t$ (and hence by the inverse
$\omega t$, see Lemma~\ref{commute}), i.e.\
\beq\Label{reverse-1}
\omega^{-1}\rho(\omega\rho(t)) =t.
\eeq
In particular, Lemma~\ref{commute} implies
\begin{equation}\Label{o2}
\rho(\omega^2 t)=\omega^2\rho(t).
\end{equation}
Also reversibility implies $\rho(t)=\pm t+\HOT$ (see \S\ref{linear}).
Furthermore, since $J_c$ interchanges the eigenspaces of $L(F)$,
we must have $L(F)\in D$ and therefore 
\begin{equation}\label{rho}
\rho(t)=t+\HOT.
\end{equation}

\subsection{}\label{5.3}
Consider an arbitrary $F\in\5C$, with $L(F)\in D$, of the form 
$F=\left(z_1\phi(p), z_2\psi(p)\right)$, with
$\rho=P(F)$. 
For any complex $c$, we calculate
$$(F J_{c}^{-1} F J_c )(z)=\left( z_1\cdot\psi(c^2p)
\cdot\phi\left(p\cdot\phi(c^2p)\cdot\psi(c^2p)\right),
z_2\cdot\phi(c^2p)\cdot
\psi\left(p\cdot\phi(c^2p)\cdot\psi(c^2p)\right)
\right).$$
Thus $J_c$ reverses $F$ if and only if
\begin{equation}\Label{E:e-1}
\left.
\begin{array}{rcl}
\psi(c^2t)\cdot\phi\left(t\phi(c^2t)\psi(c^2t)\right)
&=& 1,\\
\phi(c^2t)\cdot\psi\left(t\phi(c^2t)\psi(c^2t)\right)
&=& 1,
\end{array}
\right\}
\end{equation}
or, equivalently,
\begin{equation}\Label{E:e-1'}
\left.
\begin{array}{rcl}
\psi(c^2t)\cdot\phi\left(\omega\rho(c^{2}t)\right)
&=& 1,\\
\phi(c^2t)\cdot\psi\left(\omega\rho(c^{2}t)\right)
&=& 1,
\end{array}
\right\}
\end{equation}
where as before $\rho(t)=t\phi(t)\psi(t)$.

Now define
\begin{equation}\Label{E:e-2}
\sigma(t):=\omega\rho(t),
\end{equation}
which satisfies
\begin{equation}\label{om}
\sigma(t)=\omega t+\HOT
\end{equation}
in view of \eqref{rho}.
Then the reversibility equation \eqref{reverse-1} for $\rho$ becomes
\begin{equation}\Label{E:e-4}
\sigma^2(t) = \omega^2 t,
\end{equation}
and the reversibility equations \eqref{E:e-1'} for $F$ become
\begin{equation}
\left.
\begin{array}{rcl}\Label{E:e-3}
\psi(t)\cdot\phi\left(\sigma(t)\right)
&=& 1,\\
\phi(t)\cdot\psi\left(\sigma(t)\right)
&=& 1,
\end{array}
\right\}
\end{equation} 
where we replaced $c^{2}t$ by $t$.
%Note that $\sigma(\omega^2t)=\omega^2\sigma(t)$ as consequence of \eqref{o2}.
%Thus $\sigma(t)/t$ and $\rho(t)/t$ depend only on $t^k$, as follows immediately from power series expansion.
Denoting
\begin{equation}\Label{E:e-5}
 g(t): = \frac{\phi(t)}{\psi(t)},
\end{equation}
we obtain from \eqref{E:e-3} that
\begin{equation}\Label{E:e-6}
g(\sigma(t))=\frac{\phi(\sigma(t))}{\psi(\sigma(t))}
=\frac{\phi(t)}{\psi(t)}
=g(t).
\end{equation}
In view of \eqref{E:e-4}, it follows that 
\beq\Label{g-omega}
g(\omega^2t)=g(\sigma^2(t))=g(t).
\eeq

\medskip
Equation \eqref{E:e-4} admits two possibilities,
{\it a priori}:

%\sideremark{Case 2 removed because of \eqref{om}}

\medskip
$1^\circ$:
$\sigma$ may be the linear map
$\sigma(t)=\omega t$. In this case, Equation \eqref{E:e-2}
yields $\psi(t)=1/\phi(t)$, so $F\in\im\Phi$.
Equations \eqref{E:e-3} then yield $\phi(\omega t)=\phi(t)$,
i.e. $\phi(t)$ takes the form $\phi_1(t^k)$,
where $k$ is the order of $\omega$.

Conversely, $J_c$ reverses $F(z)=(z_1\phi(p),z_2/\phi(p))$
whenever $\phi(c^2 t)=\phi(t)$, i.e. 
$\phi(t)$ is a function of $t^k$ and the order of
$c$ divides $2k$.

We note that each of these $F$ is reversed by
the involution $J=J_1$.

\medskip
$2^\circ$: The more interesting possibility is that $\sigma$ is a nonlinear
solution of \eqref{E:e-4}.  
Since $\omega$ is a root of unity,
$\sigma$ has finite even order.  Then Lemma~\ref{L:fol} implies the existence of some
 $h(t)=t+\cdots$ such that $\sigma^h(t)=\omega t$,
where $\sigma^h$ denotes the conjugate $h^{-1}\sigma h$, i.e.\ $h^{-1}(\sigma(h(t)))=\omega t$ or, equivalently,
\begin{equation}\label{ho}
h(\omega t)= \sigma(h(t)).
\end{equation}
Since $\sigma(\omega^2t)=\omega^2\sigma(t)$, it follows 
from the formula \eqref{inverse} that $h(\omega^2t)=\omega^2h(t)$.
Furthermore, setting $g_1(t)=g(h(t))$ we obtain using \eqref{ho} and \eqref{E:e-6}:
%\sideremark{added}
\begin{equation}\label{g1}
g_1(\omega t)= g(h(\omega t)) = g( \sigma(h(t))) = g(h(t)) = g_1(t).
\end{equation}
%and hence
%\begin{equation}
%g(\omega t)=g_{1}(h^{-1}(\omega t))
%\end{equation}
The equations
$$
\left.
\begin{array}{rcl}
\phi(t)\psi(t)
&=&
\displaystyle\frac{\rho(t)}t,\\
\displaystyle\frac{\phi(t)}{\psi(t)}
&=&
g(t),
\end{array}
\right\}
$$
are clearly equivalent to 
\beq\Label{E:e-9}
\begin{array}{rcl}
\phi(t) &=&
\left(\displaystyle
\frac{\rho(t) g(t)}t
\right)^{\frac12},
\\
\psi(t) &=&
\left(\displaystyle
\frac{\rho(t)}{tg(t)}
\right)^{\frac12},
\end{array}
\eeq
where the branches of the square roots are chosen
to make 
\begin{equation}\label{phi0}
\phi(0)=\frac1{\psi(0)}=\lambda,
\end{equation}
the first eigenvalue of $F$.
It follows from \eqref{E:e-9}, \eqref{o2} and \eqref{g-omega} that 
\begin{equation}\label{phi-inv}
\phi(\omega^2t)=\phi(t), \quad\psi(\omega^2t)=\psi(t),
\end{equation}
so that these functions, also, depend only on $t^k$.

\medskip
Conversely, suppose $c$ is a $4k$-th root of unity,
for some $k$, and take any invertible $h\in\5G_1$ with $h(t)=t+\HOT$ and
$h(\omega^2 t)=\omega^2h(t)$, i.e.\  $h$ is any power series
$h(t)=t(1+\sum_{j\ge 1} h_j t^{kj})$ . Define 
\beq\Label{E:e-7}
\sigma(t)=h(\omega h^{-1}(t)), \quad \rho(t)=\omega^{-1}\sigma(t) = \omega^{-1}h(\omega h^{-1}(t)),
\eeq
in particular, $\sigma^{h}(t)=\omega t$.
Then 
clearly both $\sigma$ and $\rho$ commute with $\omega^{2}$, and
$$ \sigma^2(t) = h(\omega h^{-1}(h(\omega h^{-1}(t)))) = \omega^2 t,$$
i.e. \eqref{E:e-4} holds,
which is equivalent to $\rho$ being reversed by $\omega t$.
  Note that $\sigma$ has order dividing
$2k$.
Take any $\lambda\not=0$ and
$g_1(t) = \lambda^{2} + \HOT$ satisfying $g_1(\omega t)=g_1(t)$,
and define 
\beq\Label{E:e-8}
g(t)=g_1(h^{-1}(t)),
\eeq 
so \eqref{g1} holds.
% and therefore $g(\sigma(t))=g(t)$.
Since $h$ and $g_{1}$ commute with $\omega t$, $g$ also does, i.e.\ $g(\omega^2t)=g(t)$. Furthermore, 
$$ g(\sigma(t)) = g_1((h^{-1}(h(\omega h^{-1}(t)))))=g(t).$$
Finally define
holomorphic germs $\phi$ and $\psi$ by
\eqref{E:e-9}.
Then
$$
\psi\left(t)\phi(\sigma(t)\right) = 
\left(\displaystyle
\frac{\omega^{-1}\sigma^2(t)}{\sigma(t)}
\right)^{\frac12} 
\left(\displaystyle
\frac{\omega^{-1}\sigma(t)}{t}
\right)^{\frac12} 
=1,
$$
so the first equation in \eqref{E:e-3} holds.
The second equation follows from the first, using \eqref{E:e-5}
and \eqref{E:e-6} (and may also be verified by direct calculation).
Thus $J_c$ reverses $F$.

%\sideremark{Example fixed and extended}
\be
An explicit example of $F$ in case $2^{\circ}$, reversed by $J_i(z)=i\2z$, 
and corresponding to the choice
$$ h(t)= \frac{t}{1-t},\quad g_{1}(t) = g(t) =\l^{2}, $$
and hence
$$
\rho(t) = -\sigma(t) = \frac{t}{1+2t}, \quad
\phi(t)=\frac{\l}{(1+2t)^{1/2}}, \quad \psi(t)=\frac{\l^{-1}}{(1+2t)^{1/2}},$$ 
is given by
$$ F(z_1,z_2) =
\left( \displaystyle\frac{\lambda z_1}{(1+2z_1z_2)^{1/2}} ,
\frac{ \lambda^{-1} z_2}{(1+2z_1z_2)^{1/2}}  \right).$$
Alternatively choosing
$$
g_{1}(t)=\frac{4\l^{2}}{1+t}, 
$$
and hence
$$
g(t) = g_{1}(h^{-1}(t)) = \l^{2}\frac{(1+t)^{2}}{1+2t},
\quad
\phi(t)=\l\frac{1+t}{1+2t}, \quad \psi(t)=\l^{-1}\frac1{1+t},$$ 
we obtain an example of a rational map
$$ F(z_1,z_2) =
\left( \displaystyle\frac{\lambda (1+z_{1}z_{2}) z_1}{(1+2z_1z_2)} ,
\frac{ z_2}{\l(1+z_1z_2)}  \right).$$
These maps are not reversed by $J$ or any other involution.
Indeed, if an involution $T$ reverses $F$,
then $L(T)=\pm J$ and hence $P(T)(t)=t+\HOT$ and therefore $P(T)(t)=\id$
since $T$ is an involution. But then $P(T)$ cannot reverse $P(F)$.
\ee

\medskip
Assembling the cases, we have
identified all the series $F\in \5C$ reversed by a  
given $J_c$, and can state a theorem:

%\sideremark{$\l$ added}
\bt\Label{T:twokinds}
(1) Let $c^{2k}=1$. Then each $F\in\im\Phi$ reversed by
$J_c$ takes the form 
$$\left(z_1\phi(p),z_2/\phi(p)\right),$$
where $\phi(c^2t)=\phi(t)$.\\
(2) Let $c^{4k}=1$, and $\omega=\bar c^2$. 
Then each $F\in \5C$ with $L(F)={\sf diag}(\l,\l^{-1})$ that is reversed
by $J_c$ and does not belong to $\im\Phi$ takes the form
$(z_1\phi(p),z_2\psi(p))$, where $\phi$ and $\psi$
are defined by the equations \eqref{E:e-9} and \eqref{phi0},
$\rho$ is defined by \eqref{E:e-7}
for some $h\in\5G_1$ tangent to the identity that satisfies
$h(\omega^2 t)=\omega^{2}h(t)$, and  $g$ is defined by
\eqref{E:e-8} for some $g_1(t)=\l^2+\HOT$
that satisfies $g_1(\omega t)=g_1(t)$. 
In particular, both $\phi$ and $\psi$ depend only on $t^{k}$.
Moreover all these
maps are reversed by $J_c$.
\et

In part (2), the map $F$ may also be written in the notation of \S\ref{homs} in the form
%\sideremark{2nd formula corrected: $\psi$ instead of $\phi$}
\beq
F(z) = 
JH(\rho)J\Phi(\phi) = H(\rho)\Phi(1/\psi),
\eeq
where $\phi(\omega^2t)=\phi(t)$ by \eqref{phi-inv},
and $\rho=P(F)$ is reversed by $t\mapsto\omega t$ (see \S\ref{5.2}),
so that both $\phi(t)$ and $\rho(t)/t$ depend
only on $t^k$.  However, 
such maps $F$ are not reversed by $J_c$ in general, unless $\phi$ 
is constructed as above from $h$ and some $g_1$.

\section{Reversibility in $\5G$}

\subsection{Resonances}
\bl\Label{L:rev-res}
 If $F\in\5G$ is reversible in $\5G$, 
and has an eigenvalue that is not a root
of unity, then
$F$ is conjugate to some element of $\5C$
having $L(F)\in D$ of infinite order.
Moreover, the conjugating map can be chosen to be tangent to the identity.
\el
\bpf By a linear conjugation, we
may convert $L(F)$ to the
form $\L=\left(\begin{matrix}
\lambda&0\\0&\lambda^{-1}
\end{matrix}\right)\in D$.
Since $\lambda$
is not a root of unity, the 
only resonance relations are of the form
$$\l=\l^{k+1} (1/\l)^k, \quad 1/\l = \l^k (1/\l)^{k+1},$$
so the Poincar\'e-Dulac
Theorem \cite[Section 4.8, Theorem 4.22]{PD} tells us that 
$F$ may be conjugated to the resonant form \eqref{E:5C}.
\epf

\bl\Label{dTheta0} 
Suppose that $F\in \5G$ has $L(F)\in D$,
with $L(F)\not=\pm\id$,
and
is reversible. Then each reverser $\Theta\in\5G$
has linear part  of the form $L(\T)(z_{1},z_{2})=(az_{2},bz_{1})$.
Also, it is possible to choose a reverser
with linear part $J_c$.
\el
\bpf
The first assertion follows from the fact that $L(\T)$ must interchange the eigenspaces of $L(F)$.
In view of Lemma~\ref{L:rev-res}, we may assume that $F\in\5C$.
In general, the composition of a reverser of $F$
and an element of the centraliser $C_F(\5G)$ 
is another reverser of $F$.
Since $D\le C_F$, we may compose $\Theta$
with an element of $D$
to convert its linear part to the form
$J_c$.
\epf

\subsection{Terminology}
It is usual to say that a map is in {\em resonant form}
if the homogeneous terms in its expansion
commute with the linear part. For maps $F$ that belong
to $\5G$ and have $L(F)\in D$ of infinite order,
this just means that they belong to $\5C$, and
is independent of the particular $F$. For maps
with $L(F)$ of finite order, resonance amounts to 
a less restrictive condition. Since we are concentrating on
the generic case, we shall use the term {\em $D$-resonant map}
to mean an element of $\5C$.  Similarly, we
shall refer to all elements of $\5R$ as
{\em $D$-inverse-resonant maps}. 

More generally,
we extend this terminology to maps that
may not be invertible:

%\sideremark{Changed back}
\bd
A formal power series
map $G\colon(\C^n,0)\to (\C^n,0)$
is called {\em $D$-resonant} (resp.\ {\em $D$-inverse-resonant})
if $G\circ M=M\circ G$ (resp.
 $G\circ M=M^{-1}\circ G$), whenever $M\in D$.
\ed

\br
So a series $G\colon(\C^2,0)\to (\C^2,0)$
with $L(G)\in D$ of infinite order is $D$-resonant (resp.\ $D$-inverse-resonant) 
if and only if it is the sum of monomials $p^k az$
(resp.\ $p^k a\2z$), where $a$ is some diagonal matrix, $p=z_{1}z_{2}$
and $\2z=(z_{2},z_{1})$.
\er

We have the following obvious properties:

\bl\Label{res-comp}
Let $G_1,G_2$ be $D$-resonant and $H_1,H_2$ be $D$-inverse-resonant.
Then $G_1\circ G_2$ and $H_1\circ H_2$ are $D$-resonant,
whereas $G_1\circ H_1$ and $H_1\circ G_1$
are $D$-inverse-resonant.
\el

\subsection{Reversers of Resonant Maps}
\bp\Label{Theta-res}
Suppose that $F\in\5C$, with $\L=L(F)$
of infinite order, and $F$ is reversed
by some $\Theta\in\5G$. Then $\Theta\in\5R$.
\ep

\bpf
By Lemma \ref{dTheta0}, the linear part of $\Theta$ belongs to $\5R$.
Assume by induction that all terms of order less than $k$ 
in the expansion of $\Theta$ are $D$-inverse-resonant.
Identifying homogeneous components of order $k$ in the basic reversibility equation
\begin{equation}
F\circ\T\circ F=\T,
\end{equation}
we obtain an identity $\L L_k(\Theta)(\L z)=L_k(\Theta)(z)+\ldots$ (using the notation introduced in \S\ref{l-notation}),
where the dots contain expressions involving  only $L_m(\Theta)$ with $m<k$.
It then follows from the inductive assumption and Lemma~\ref{res-comp}
that these terms are $D$-inverse-resonant.
Hence $\L L_k(\Theta)(\L z)-L_k(\Theta)(z)$
is $D$-inverse-resonant, which is only possible when 
$L_k(\Theta)$ is $D$-inverse-resonant, as is readily seen by using the
fact that $\Lambda$ has infinite order.
The proof is complete.
\epf

Combining Lemmas \ref{L:rev-res} and \ref{Theta-res}, we have:
\bt\Label{T:reduction}
Let $F\in\5G$ and suppose $L(F)$ has an eigenvalue that
is not a root of unity.
Then $F$ is reversible in $\5G$ if and only if 
it is conjugate in $\5G$ to some $D$-resonant element
$G\in\5C$, having $L(G)\in D$,
that is reversed by some $D$-inverse-resonant
element $\T\in \5R$.
\et

\section{Conjugacy Classes of Reversibles}\Label{classes}
By Theorem \ref{T:reduction}, 
each generic reversible of $\5G$ is
conjugate in $\5G$ to a map of the form
$\L F$, where $\L\in D$ and $F\in\5C$
is tangent to the identity and is reversible
in $\5E$. By Corollary \ref{C:FO}, $\L F$, and hence
$F$, may be reversed
by some element of finite order in $\5R$,
and by a further conjugation (using an element
of $\5C$, which does not disturb the factorization
$\L F$),
we may arrange that $F$ is reversed by 
some linear map, which may be taken
to be a $J_c$,
for some root of unity $c$.
By Theorem \ref{T:twokinds}, $F$ is of one of two 
kinds.
Now we turn to the question of cataloging the 
conjugacy classes in $\5G$ of the
maps of these two kinds.
 
\subsection{}
First we consider their conjugacy classes
in $\5E$.

The key idea is to use the conjugacy 
actions induced by the homomorphisms 
$H:\5G_1\to\5C$ and $\Phi:\5F_1^\times\to\5C$
on the group $\5C$ (as introduced in \S\ref{homs}).

\medskip
Consider general $F=M(\phi,\psi)\in\5C$
reversible or not,
given by \eqref{E:5C}. Let $\rho=P(F)$.
For $\chi\in\5G_1$, letting $K_1=H(\chi)$
and $K_2=JH(\chi)J$, 
we calculate
\beq K_1^{-1}FK_1(z) = 
\left( \frac{z_1 \rho^\chi(p)}{p\psi(\chi(p))},z_2\psi(\chi(p))
\right),\eeq
and, similarly,
\beq\Label{E:conj-K_2}
 K_2^{-1}FK_2(z) = 
\left( z_1\phi(\chi(p)), \frac{z_2 \rho^\chi(p)}{p\phi(\chi(p))}
\right).
\eeq
Also, for any $\phi\in\5F_1^\times$, we calculate
\beq\Label{E:conj-Phi(phi_1)} 
\Phi(\phi_1)^{-1}F\Phi(\phi_1)(z)=
\left(
\frac{z_1\phi(p)\phi_1(p)}{\phi_1(\rho(p))},
\frac{z_2\psi(p)\phi_1(\rho(p))}{\phi_1(p)}
\right).
\eeq

\medskip
Now consider the two kinds of reversible $F\in\5C$,
tangent to, but not equal to, the identity,
reversed by $J_c$, as in Theorem \ref{T:twokinds}:

\medskip
$1^\circ$: $F\in\im\Phi$, so $\rho=\id$.
\\
We have 
$\phi(t)=1+\alpha t^k+\HOT$, for some $k\in\N$
and $\alpha\not=0$.
Then we may choose $\chi\in\5G_1$ so that 
$\phi(\chi(t))=1+t^k$, and then
\beq\Label{E:normal-1}
 K_2^{-1}FK_2(z) = 
\left( z_1(1+p^k), \frac{z_2}{(1+p^k)}
\right).\eeq

Thus $F$
is represented,
up to conjugacy in $\5C$, by one of the maps
\eqref{E:normal-1}, 
for some $k\in\N$.

\medskip
$2^\circ$: $\rho\not=\id$.
$F$ takes the form
\beq\Label{E:normal-3} \left(z_1\phi(p),\frac{z_2\rho(p)}{p\phi(p)}\right) 
= JH(\rho)J\Phi(\phi),
 \eeq
for some reversible $\rho\in\5G_1$,
with $\rho=t+\HOT$, but $\rho\not=\id$,
(reversed by $\omega t$, where $\omega^k=-1$,
for some $k\in\N$)
and some $\phi(t)$ that depends only on $t^k$ (see \S\ref{5.3}).  
Then \cite{O} $\rho$ is conjugate to 
\beq\Label{1D-normal} f_k(t) = \frac{t}{(1-k t^k)^{\frac1k}}=
%-\frac1k (-\frac1k-1)\frac12 k^{2}=
t+t^ {k+1}+\left(\frac{k+1}2\right)t^{2k+1}+\HOT,
\eeq
which is also reversed by all odd powers of $\omega t$
(where $\omega=\bar c^2$, as before).
%\sideremark{revised, to make $\rho=f_k$ straight away}
Let $\chi\in\5G_1$ conjugate $\rho$ to $f_k$. Then
conjugating $F$ by $JH(\chi)J$, we may assume that
$\rho=f_k$.  Then the formal iterate $\rho^\alpha$
is given by 
\beq\Label{E:1D-flow} \rho^\alpha(t)= f_k^\alpha(t) = \frac{t}{(1-k\alpha t^k)^{\frac1k}}
= t\left(1+\alpha t^k+\HOT\right),
\eeq
whenever $\alpha\in\C$.

Choose $\alpha\in\C$ such that $\phi(t)=1+\alpha t^k+\HOT$
(note that $F$ is tangent to the identity).
Then $\frac{\rho^\alpha(t)}{\phi(t)}=t(1+\HOT)$, where
the higher terms involve only monomials $b_jt^{jk}$
with $j\ge2$, so we may choose $\phi_1(t)$, depending only on
$t^k$, such that
$$ \frac{\phi_1(t)}{\phi_1(\rho(t))}
= \frac{\rho^\alpha(t)}{t\phi(t)}.$$
The latter fact follows by writing $\phi_{1}(t)=1+\sum a_{j}t^{jk}$, expanding the formula
\begin{equation}\Label{arg}
1+\sum a_{j}t^{jk}=\Big(1+\sum_{j\ge 2} b_{j}t^{jk}\Big) \Big(1+\sum a_{j}(t(1+t^{k}+\HOT))^{jk}\Big),
\end{equation}
identifying coefficients of $t^{jk}$ and solving inductively for $a_{j-1}$.

Then the conjugation 
\eqref{E:conj-Phi(phi_1)} converts $F$ to the form
\beq\Label{E:normal-2} 
F = \left(
\frac{z_1\rho^\alpha(p)}{p},
\frac{z_2\rho(p)}{\rho^\alpha(p)}
\right),
\eeq
and this is reversed by
$$ \Phi(\phi_1)^{-1}J_c\Phi(\phi_1)= J_c\Phi(\phi_1^2).$$

We calculate (using the fact that $z\mapsto\omega z$ reverses $\rho^\alpha$)
that
$$
\begin{array}{rcl}
 F^{J_c}(z) 
&=& 
J_{\bar c}F(cz_2,cz_1)\\
&=& 
J_{\bar c}
\left(
\dsty\frac{cz_2\rho^\alpha(\bar\omega p)}{\bar\omega p},
\dsty\frac{cz_1\rho(\bar\omega p)}{\rho^\alpha(\bar\omega p)}
\right)\\
&=& 
\left(
\dsty\frac{cz_2\bar\omega\rho^{-\alpha}(p)}{\bar\omega p},
\dsty\frac{cz_1\bar\omega\rho^{-1}(p)}{\bar\omega\rho^{-\alpha}(p)}
\right)\\
&=& 
\left(
\dsty\frac{z_1\rho^{-1}(p)}{\rho^{-\alpha}(p)},
\dsty\frac{z_2\rho^{-\alpha}(p)}{p}
\right),
\end{array}
$$
$$
\begin{array}{rcl}
H(\rho)\Phi\left(\dsty\frac{\rho^{\alpha}(t)}{\rho(t)}\right)(z)
&=& H(\rho)
\left(
\dsty\frac{z_1\rho^{\alpha}(p)}{\rho(p)},
\dsty\frac{z_2\rho(p)}{\rho^\alpha(p)}
\right)\\
&=& 
\left(
\dsty\frac{z_1\rho^\alpha(p)}{\rho(p)}\frac{\rho(p)}p,
\dsty\frac{z_2\rho(p)}{\rho^\alpha(p)}
\right) = F(z),
\end{array}
$$
$$ 
\begin{array}{rcl}
F^{-1}(z) 
&=&
\Phi\left(\dsty\frac{\rho^\alpha(t)}{\rho(t)}\right)^{-1}H(\rho)^{-1}(z)\\
&=&
\Phi\left(\dsty\frac{\rho(t)}{\rho^\alpha(t)}\right)H(\rho^{-1})(z)\\
&=&
\Phi\left(\dsty\frac{\rho(t)}{\rho^\alpha(t)}\right)
\left(
\dsty\frac{z_1\rho^{-1}(p)}p,z_2
\right)\\
&=&
\left(
\dsty\frac{
z_1\rho^{-1}(p)}p 
\cdot\frac{\rho(\rho^{-1}(p)}{\rho^{\alpha}(\rho^{-1}(p))},
\dsty\frac{z_2 \rho^{\alpha}(\rho^{-1}(p))}{\rho(\rho^{-1}(p))}
\right)\\
&=& \left(
\dsty\frac{z_1\rho^{-1}(p)}{\rho^{\alpha-1}(p)},
\dsty\frac{z_2\rho^{\alpha-1}(p)}{p}
\right).
\end{array}
$$
But conjugation of $M(\phi,\psi)$ by $\Phi(\phi_1)$,
and hence
by $\Phi(\phi_1^2)$, does not change
the coefficient of $t^k$ in $\phi$, so comparing this
coefficient in the maps $F^{J_c}$ and $F^{-1}$,
we obtain $\alpha-1=-\alpha$, or $\alpha=\frac12$.
Thus $F$ takes the form
\beq\Label{E:normal-4} 
F(z) = \left(
\frac{z_1\rho^{\frac12}(p)}{p},
\frac{z_2\rho(p)}{\rho^{\frac12}(p)}
\right).
\eeq
Conjugating with $\nu(z)=\2z$ we obtain
\beq\Label{E:normal-4'} 
F(z) = \left(
\frac{z_1\rho(p)}{\rho^{\frac12}(p)},
\frac{z_2\rho^{\frac12}(p)}{p}
\right)
=
  \left(
\frac{(1-\frac{k}{2}p^k)^{\frac1k}}{(1-kp^{k})^{\frac1k}} z_{1},
\frac{1}{(1-\frac{k}{2} p^k)^{\frac1k}} z_{2}
\right).
\eeq
Conjugating further by suitable scaling, $\2F$ takes the form
 \beq\Label{norm-2}
 F(z)=
  \left(
\frac{(1+ p^k)^{\frac1k}}{(1+2p^{k})^{\frac1k}} z_{1},
\frac{1}{(1+ p^k)^{\frac1k}} z_{2}
\right).
 \eeq
%\sideremark{Fixed}
The alternative forms
\beq\Label{E:normal-6} 
 \left(
\frac{(1-\frac{1}{2}p^k)}{(1-kp^{k})^{\frac1k}} z_{1},
\frac{1}{(1-\frac{1}{2} p^k)} z_{2}
\right),
\quad
 \left(
\frac{1}{(1-kp^{k})^{\frac1k} (1+\frac{1}{2}p^k)} z_{1},
\Big(1+\frac{1}{2} p^k\Big) z_{2}
\right)
\eeq
may be obtained from \eqref{E:normal-4'} by a conjugation of the type
\eqref{E:conj-Phi(phi_1)} with $\phi_{1}$ satisfying respectively
$$\frac{\phi_{1}(t)}{\phi_{1}(\rho(t))}=\frac{(1-\frac{1}{2}p^k)}{(1-\frac{k}{2}p^k)^{1/k}}
\quad \text{ or }\quad
\frac{\phi_{1}(t)}{\phi_{1}(\rho(t))}
=\frac{1}{(1-\frac{k}{2}p^k)^{1/k}(1+\frac{1}{2}p^k)}
.$$
%\sideremark{Explanation added}
The latter fact follows by the argument analogous to the one preceeding
\eqref{arg}.

\medskip
Vice versa, we have the following lemma that can be verified by direct calculation:
\bl\Label{reversed}
Let $\rho(t)=t+\HOT$ be reversed by the rotation $t\mapsto\omega t$
with $\omega=c^{-2}$. Then 
\beq\Label{E:normal-41} 
F(z) = \left(
\frac{z_1\rho(p)}{\rho^{\frac12}(p)},
\frac{z_2\rho^{\frac12}(p)}{p}
\right), \quad p=z_{1}z_{2},
\eeq
is reversed by $J_{c}$.
\el

\bpf[Proof of Theorem~\ref{main0}]
Summarizing, we obtain that any reversible map is formally conjugate
either to a linear map or to a map \eqref{E:normal-1} or
to a map \eqref{norm-2}, which proves the first assertion of Theorem~\ref{main0}.

To show that these map are pairwise inequivalent under conjugation,
note that the maps $F$ in \eqref{E:normal-1} have $P(F)=\id$, whereas the ones in \eqref{norm-2} have $P(F)$ conjugate to $f_{k}$.
As consequence of Poincar\'e-Dulac,
any conjugation map between those maps must be $D$-resonant,
i.e.\ in the centralizer $\5C$.
Consequently, the corresponding one-variable maps $P(F)$
must be conjugate.
This shows that the maps in \eqref{norm-2} are pairwise inequivalent
under conjugation and are not conjugate to any map in \eqref{E:normal-1}.
To see that also the maps in \eqref{E:normal-1} are pairwise inequivalent,
since any map in $\5C$ splits as $H(\chi)\Phi(\phi)$,
it suffices to observe any conjugation by $\Phi(\phi)$ is trivial,
whereas a conjugation by $H(\chi)$ is given by \eqref{E:conj-K_2}
and hence cannot change the integer $k$.

The statement (1) of Theorem~\ref{main0}
is evident for maps in \eqref{E:normal-1}
and follows from Lemma~\ref{reversed} for maps in \eqref{norm-2}.

To show the statement (2), observe that any map in \eqref{E:normal-1}
is reversed by the involution $J(z)=\2z$ and hence is strongly reversible.
On the other hand, suppose a map $F$ in \eqref{norm-2}
is reversed by an involution $\Theta$.
Then we know from Proposition~\ref{Theta-res}
that $\Theta$ must be $D$-inverse-resonant,
i.e.\ it reverses the linear part of $F$.
Since $\Theta$ is an involution
and $\l\ne\l^{-1}$, we must have $L(\Theta)=\pm J$
and therefore $P(\Theta)(t)=t +\HOT$.
But then, since $P(\Theta)$ is also an involution,
it follows that $P(\Theta)=\id$.
Consequently $P(\Theta)$ cannot reverse
$P(F)\ne P(F)^{-1}$ and hence $\Theta$ cannot reverse $F$.

Finally the statement (3) follows from Corollary~\ref{C:rev-FO}.
\epf

\bpf[Proof of Proposition~\ref{polynomial}]
In the foregoing argument, the specific form $f_k$ may be
replaced by any element $\rho(t)\in\5G_1$ 
that only has powers $t^{1+jk}$ and takes the form
$$ \rho(t) = t\left(
1+t^k+\left(\dsty\frac{k+1}2\right)t^{2k}+\HOT
\right),
$$
and we still obtain the normal form \eqref{E:normal-4} with the new $\rho$. 
To see this, note first that each such $\rho$
is conjugate in $\5G_1$ to $f_k$, and that the conjugating
map, say $\chi$, only has powers $t^{1+jk}$ and may be chosen equal to
be equal to the identity up to the terms of order $2k+1$.
%\sideremark{added explanation}
It follows that $J_c$ commutes with $H(\chi)$ up to order $4k+1$
and therefore it reverses $F^{H(\chi)}$ up to terms 
of degree $4k+1$ in $z$, and, arguing as before, we can conjugate
$F^{H(\chi)}$ to the form \eqref{E:normal-2}, 
and then we still get $\alpha=\frac12$.
Hence $F^{H(\chi)}$ is conjugate to \eqref{E:normal-6}. 
Now, noting that
%\sideremark{Fixed}
$$
\frac{\rho(t)}{t(1+\frac12 t^k)}
=
1+\frac12 t^k +\left(\frac{2k+1}4\right)t^{2k}+\HOT,$$
%\sideremark{Explicit form of $\rho$ given}
we may choose
%\sideremark{$t$ inserted, and changed $p$ to $t$} 
$$\rho(t)=  t\Big(1+\frac12 t^k+\Big(\frac{2k+1}4\Big)t^{2k} \Big) 
\Big(1+\frac12 t^k\Big)$$
and get the form
$$ F(z) = 
\left(
z_1\Big(1+\frac12 p^k\Big),
z_2\left(1+\frac12 p^k+\left(\frac{2k+1}4\right)p^{2k}\right)
\right).$$
Conjugating by $z\mapsto\alpha z$, with $\alpha^{2k}=2$, we get
the tidier polynomial form
\beq\Label{E:normal-7}
F(z) = 
\left(
z_1(1+ p^k),
z_2\left(1+ p^k+(2k+1)p^{2k}\right)
\right).
\eeq
\epf

%\sideremark{I could not see that (7.11) is reversed by $J_{c}$, the same in Remark below}
%\sideremark{corrected ref to 
%\eqref{E:normal-4}}
The alternate forms \eqref{E:normal-4}/\eqref{E:normal-7} each have their
advantages.  The second has the simpler form, but the first 
is reversed by the simple map $J_c$.

\br We have seen that
maps of the form \eqref{E:normal-2}
are only reversed by some $J_c$ when $\alpha=\frac12$. 
However, it is worth remarking that for every $\alpha\in\C$
they are reversed by the {\em $D$-resonant} maps $c\cdot H(\rho^\alpha)$,
as is readily checked.
This does not mean, of course,
that the original map is $\L F$ is reversible for $\a\ne\frac12$
but might be of interest
for the study of reversible $F\in\5G$
that are tangent to the identity.
\er

\section{Convergence properties of the normal form}

The convergence of the normal form as stated in Theorem~\ref{convergence}
is obtained by closely following the arguments of 
the proof of Theorem~4.1 in \cite{MW}, page 283.
In fact, assume that $F$ is biholomorphic and admits 
a formal Poincar\'e-Dulac normal form
\begin{equation}\Label{pd-normal}
\xi = M\xi, \quad \eta=M^{-1}\eta,
\end{equation}
(written in the form similar to (3.6) in \cite{MW}),
where $M$ is a formal power series in the product $\xi\eta$.
Changing the notation $(x,y)=(z_{1},z_{2})$ as  in \cite{MW} and
writing $F$ as
\begin{equation}
x'=\l x + f(x,y), \quad y'=\l^{-1} y + g(x,y)
\end{equation}
with $f$ and $g$ convergent of order at least $2$,
analogously to (3.2) in \cite{MW},
and the conjugation map $\psi$ into the normal form as
\begin{equation}
x=U(\xi,\eta)= \xi + u(\xi,\eta), \quad y=V(\xi,\eta)=\eta + v(\xi,\eta)
\end{equation}
with $u$ and $v$ of order at least $2$,
analogously to (3.3) in \cite{MW}, 
the fact that $\psi$ conjugates $F$ to \eqref{pd-normal}
can be written as
\begin{equation}\Label{UV}
U(M\xi,M^{-1}\eta)-\l U(\xi,\eta)=f(U,V), \quad
V(M\xi,M^{-1}\eta)-\l V(\xi,\eta)=g(U,V), 
\end{equation}
analogously to (4.1) in \cite{MW}.

As in \cite{MW} a formal power series $p(\xi,\eta)$
is said to have type $s$ if it can be written
$$p(\xi,\eta)=\sum_{i-j=s} p_{ij}\xi^{i}\eta^{j}$$
and any power series $p$ admits an unique decomposition
$$p(\xi,\eta)=\sum p_{s}(\xi,\eta)$$
with $p_{s}$ having type $s$.
According to the proof of Poincar\'e-Dulac,
we may assume that the conjugating map $\psi$
has no resonant terms, which as in \cite[(3.4)]{MW} can be written
using the type decomposition as
\begin{equation}\Label{normalization}
u_{1}=0, \quad v_{-1}=0.
\end{equation}

As in \cite[(4.2)]{MW}, taking terms of the same type in \eqref{UV}
yields
\begin{equation}
(M^{s}-\l)U_{s}=[f(U,V)]_{s}, \quad (M^{s}-\l^{-1})V_{s}=[g(U,V)]_{s}.
\end{equation}
Then proceeding as in \cite{MW}, pages 284--286,
we obtain a convergent series $W(\xi)$
majorizing $u(\xi,\xi)$ and $v(\xi,\xi)$
and hence proving the convergence of
the conjugating map $\psi$.
Note that this proof gives the convergence
of the (unique) map $\psi$
satisfying \eqref{normalization}
and conjugating $F$ to a normal form \eqref{pd-normal}.

Finally, once $F$ is biholomorphically conjugate to \eqref{pd-normal}
with $M$ convergent, its further biholomorphic conjugation
into a form \eqref{E:normal-1} is obtained
by suitable $K_{2}=H(\chi)$ associated with convergent $\chi$.
This ends the proof of Theorem~\ref{convergence}.

\medskip
The following example shows that,
in contrast to the first series in \eqref{E:normal-0'},
biholomorphic reversible maps 
conjugate to a map in the second series in \eqref{E:normal-0'}
may fail to be biholomorphically conjugate to their formal normal form.

\be\Label{divergent}
Let $\rho(t)=t+\HOT$ be any one-variable
biholomorphic map that is formally but not biholomorphically reversible,
see \cite[Example~4.8]{AO} for the existence of such maps.
Let $\theta$ be a formal map reversing $\rho$.
Then any two-variable map 
$$F:=\L H(\rho) = \left(
\frac{\rho(p)}{p} \l z_{1}, \l^{-1}z_{2}
\right) $$
is formally reversible by $H(\theta)$.
On the other hand, any biholomorphic reverser $\Theta$ of $F$
with $\l$ not a root of unity,
would be inverse-resonant by Proposition~\ref{Theta-res}.
But then $\rho=P(F)$ would be reversible by the biholomorphic
map $P(\Theta)$, which is impossible due to the choice of $\rho$.
Hence $F$ cannot be biholomorphically reversible.
In particular, it cannot be biholomorhically conjugate
to any map in \eqref{E:normal-0'}.
\ee

\section{Factorization in $\5G$}\Label{factor}
The homomorphisms $\Phi$ and $H$ may also be applied 
%\sideremark{added linking text}
to resolve another question.
It is an interesting fact that in many very large
groups each element may be factored as the product of 
a fixed small number of elements of a handful of conjugacy classes.
For instance, any permutation of a finite set is a product of transpositions,
and also the product of two involutions.
Of particular interest are products of involutions,
and, more generally, products of reversibles.
In the present case, we have the following:

\bt\Label{T:prod-4-rev}
If $F\in\5G$ has $\det L(F)=1$, then
it may be factorized as $F=
g_1 g_2 g_3 g_4$, where each $g_j$
is reversible in $\5G$.
\et

\br Each product 
$F=f_1\cdots f_n$
of reversible $f_j$'s has 
$\det L(F)=\pm1$, so (multiplying if
necessary by a suitable linear involution)
it follows from the theorem that each product of
reversibles reduces to the product of
five.  It also follows that the elements that
are products of reversibles are precisely those
with $\det L(F)=\pm1$.
%\sideremark{added sentence}
\er

%\sideremark{Added Jordan NF} 
\bpf[Proof of Theorem~\ref{T:prod-4-rev}]
In fact, if $\det L(F) =1$, then 
{conjugating to 
a Jordan normal form  and
multiplying by some (reversible) $\Lambda\in D$
%(possibly the identity)
we can arrange that $L(F\2\Lambda)$ is conjugate to an
infinite-order element of $D$, where $\2\L$ is conjugate to $\L$
and therefore reversible. }
Then by Poincar\'e-Dulac, $F\2\Lambda$
is conjugate (say by $K\in\5G$) 
to some element of the centralizer $\5C$, so that $(F\2\Lambda)^K$ is resonant and hence may be
%\sideremark{dropped $\pm$, and added more specific ref} 
factored as $H(\chi) \Phi(\phi)$, where
$\chi(t)=t+ \HOT$. Now $\Phi(\phi)$
is reversible, 
%\sideremark{Reference corrected}
and we know \cite[Theorem 9(2)]{O} 
that $\chi$ is the product
of two reversibles in $\5G_1$, so $H(\chi)$ is the
product of two reversibles, say $H(\chi_1)$ and $H(\chi_2)$.
Thus  
$$F^K= H(\chi_1) H(\chi_2) \Phi(\phi) 
(\2\Lambda^{-1})^K$$
is the product of four reversibles, and conjugating with $K^{-1}$ we obtain the result.
\epf	
%\sideremark{Involution material removed}

Theorem~\ref{prod} now follows from Theorem~\ref{T:prod-4-rev}.
Indeed, if $\det L(F)=1$, it is the product of $4$ reversibles
with the involution equal to the identity.
Otherwise if $\det L(F)=-1$, consider the involution $\nu(z_{1},z_{2})=(-z_{1},z_{2})$.
Then $\det L(F\nu)=1$ and hence $F\nu$ is the product of $4$ reversibles,
implying the result.

%\sideremark{higher-dim material removed}
 
\section{Reversible Biholomorphic Maps}
For series in one variable, the single formal conjugacy
class of $f_k$ in $\5G_1$, intersected with
the subgroup $\6G_1$ of biholomorphic maps splits into uncountably
many conjugacy classes.  Functional moduli
for these classes have been provided by \'Ecalle and Voronin\cite{PD}.
It is not necessarily true that every formally-reversible 
biholomorphic map is biholomorphically-reversible,
but because of the fact that all reversers are of finite order
it is true that every reversible
biholomorphic map is conjugate to one that is reversed by 
a rational rotation.  The same principle carries
over to our present context:

\bt\Label{biholo}
 Let $F\in\6G$ be an invertible biholomorphic germ
on $(\C^2,0)$, and suppose that $L(F)$ has an
eigenvalue that is not a root of unity.  Then $F$ is reversible
in $\6G$ if and only if it is conjugate in $\6G$
to a map that is reversed by linear map of finite order.
\et

\bpf Suppose $\T\in\6G$ reverses $F\ne\id$ in $\6G$. Then $\T$ reverses
$F$ in $\5G$ by Theorem~\ref{main0}, and hence has finite order. Thus, by Lemma \ref{L:fol},
$\T$ is conjugate in $\6G$ to a linear map. Applying the
same conjugation to $F$, we obtain the result.
\epf 

\br It remains open, even for one-variable maps, whether
results such as Theorems \ref{T:prod-4-rev}
or \ref{prod} hold for {biholomorphic maps}.
\er


\begin{thebibliography}{BER96b}

\bibitem{AG} {\bf P. Ahern and X. Gong.} A Complete Classification for Pairs of Real Analytic Curves in
the Complex Plane with Tangential Intersection. {\em J. Dynamical and Control Systems} {\bf 11}
(2005), 1-71.

\bibitem{AO} {\bf P. Ahern and A.G. O'Farrell.}
Reversible biholomorphic germs.  
Comput. Methods Funct. Theory 9 (2009), No. 2, 473--84.

\bibitem{AR} {\bf P. Ahern and W. Rudin.}
      Periodic automorphisms of $\C^n$,
    {\em Indiana Univ. Math. J.},
     {\bf 44}
      (1995),
      287 -- 303.

\bibitem{B} {\bf G.D. Birkhoff.} The restricted problem of three bodies, 
{\em Rend. Circ. Mat. Palermo} {\bf 39} (1915), 265--334.

\bibitem{BZ} {\bf F. Bracci and D. Zaitsev.} Dynamics of one-resonant biholomorphisms.
{\em Journal of the European Mathematical Society (JEMS)} (2011), 1--21.


\bibitem{G} {\bf X. Gong.} Anti-holomorphically reversible holomorphic maps that are not holomorphically reversible. Geometric function theory in several complex variables, 151--164, World Sci. Publishing, River Edge, NJ, 2004.

\bibitem{PD} {\bf Yu. Ilyashenko and S. Yakovenko. }
Lectures on Analytic Differential Equations.
AMS. Providence. 2008. (ISBN: 978-0-8218-3667-5).
%Reference for Poincar\'e-Dulac Theorem.

\bibitem{MK} {\bf R.S. MacKay.} Renormalisation in Area Preserving Maps.
Thesis. Princeton University (Department of Astrophysics), 1982.

\bibitem{MZ} {\bf D. Montgomery and L. Zippin.}
Topological Transformation Groups, Interscience, New York, 1955.

\bibitem{M1956} {\bf J. Moser.} 
The analytic invariants of an area-preserving 
mapping near a hyperbolic fixed point. 
{\em Comm. Pure Appl. Math.}
{\em Comm. PAM.} {\bf 9} (1956) 673--692.

\bibitem{MW} {\bf J.K. Moser and S.M Webster.} Normal forms for real surfaces in ${\C}\sp{2}$ near complex tangents and hyperbolic surface transformations. {\em Acta Math.} {\bf 150} (1983), no. 3-4, 255--296.

\bibitem{O} {\bf A.G. O'Farrell.}
Compositions of involutive power series and reversible series. 
{\em Comput. Methods Funct. Theory} {\bf 8} (2008) 173-93. 

\bibitem{W} {\bf S.M. Webster.} Pairs of intersecting real manifolds in complex space. {\em Asian J. Math.} {\bf 7} (2003), no. 4, 449Ð462. 

\end{thebibliography}
\end{document}